%PREAMBULO PARA ARTICLE
\documentclass[letterpaper, bothsides, 12pt]{article} % Configuración del documento
\usepackage[letterpaper,left=2.5cm,top=2.5cm,right=2.5cm,bottom=2.5cm]{geometry}  % Margenes
\usepackage[utf8]{inputenc}
\sloppy %  Suaviza las reglas de ruptura de líneas de LaTeX
%\frenchspacing % usar espaciado normal después de '.'

%PAQUETES NECESARIOS

%\usepackage[a4paper, right=2cm, top=2.5cm]{geometry} % Margenes en A4
\usepackage{latexsym} % Símbolos
\usepackage{layout}
\usepackage{float}
\usepackage{textcomp}
\usepackage{amstext}
\usepackage[pdftex]{graphicx}
\usepackage{multicol} % Texto a varias columnas
\usepackage{longtable} % Tablas largas verticalmente
\usepackage{lscape} % Orientación horizontal
\usepackage{multirow} % Tablas multifila
\usepackage{array} % Entorno matricial
\usepackage{amssymb}
\usepackage{amsmath} % Formas matemáticas
\usepackage{amsthm}
\usepackage[T1]{fontenc} % Tildar
\usepackage{listings} % Algoritmos
\usepackage[center, small]{caption2} %rótulos de imagenes y tablas
\usepackage{graphicx}
\usepackage{verbatim}

\newtheorem{theorem}{Theorem}[section]
\newtheorem{lemma}[theorem]{Lemma}

\theoremstyle{definition}
\newtheorem{definition}[theorem]{Definition}

\theoremstyle{remark}
\newtheorem{remark}[theorem]{Remark}
\theoremstyle{notation}
\newtheorem{notation}[theorem]{Notation}

\theoremstyle{prop}
\newtheorem{prop}[theorem]{Propersition}

\theoremstyle{coexample}

% PERSONALIZACIóN DE CABECERAS

\pagestyle{myheadings}
\usepackage{titlesec}
\theoremstyle{definition}
\parindent=0mm % Sangrado

% COLORES

\usepackage[pdftex]{color}
\usepackage[pdftex,colorlinks=true,linkcolor=blue,citecolor=black,urlcolor=cyan]{hyperref}

% TíTULOS EN ESPA?OL

%\renewcommand{\contentsname}{Contenido}
%\renewcommand{\partname}{Parte}
%\renewcommand{\indexname}{Lista Alfabética}
%\renewcommand{\appendixname}{Apéndice}
%\renewcommand{\figurename}{Figura}
%\renewcommand{\listfigurename}{Lista de Figuras}
%\renewcommand{\listtablename}{Lista de Tablas}
%\renewcommand{\abstractname}{Resumen}
%\renewcommand{\refname}{Bibliografía}

% SIMBOLOS ABREVIADOS

\newlength{\LL}\settowidth{\LL}{5000}

% TíTULO, AUTOR Y FECHA

\title{ \Huge \bf Notes on generalized special Lagrangian equation}
\author{Xingchen Zhou\footnote{zxc3zxc4zxc5@stu.xjtu.edu.cn}
~~ \\ {\it Department of Mathematical Sciences} \\ {\it Tsinghua University}
}
%\title{ \Huge \bf Hessian estimates for Lagrangian mean curvature equation with sharp Lipschitz phase}
%\author{Xingchen Zhou \footnote{zxc3zxc4zxc5@stu.xjtu.edu.cn}
%~,~ David Leonardo Ricaurte \footnote{dlricaurte@unal.edu.co~~~133446} %\\ {\it Introducción a la Física Subatómica}, {\it Departamento de Física} \\ {\it Universidad Nacional de Colombia} \\ {\it Bogotá}
%}

\date{\today}

% EMPEZAR DOCUMENTO

\begin{document}
\renewcommand{\tablename}{Tabla}

\maketitle

\hrulefill

\begin{abstract}
We obtain a prior $C^{1,1}$ estimates for some Hessian (quotient) equations with positive Lipschitz right hand sides, through studying a twisted special Lagrangian equation. The results imply the interior $C^{2,\alpha}$ regularity for $C^0$ viscosity solutions to $\sigma_2=f^2(x)$ in dimension 3, with positive Lipschitz $f(x)$.
\end{abstract}

%\begin{figure}[H]
%\begin{center}
%\includegraphics[scale=0.7]{lentes_delgadas.jpg} \\
%\caption{Geometria}\label{paraxial}
%\end{center}
%\end{figure}

%\begin{equation}\label{paraxial}
%\frac{1}{s_o}+\frac{1}{s_i}=(n_{lm}-1)\left(\frac{1}{R_1} - \frac{1}{R_2}\right)
%\end{equation}

\section{Introduction}
In this paper we will study the following generalized special Lagrangian equation
\begin{equation}\label{gslag1}
\sum_{i=1}^{n} \arctan \frac{\lambda_i}{f(x)}= \Theta
\end{equation}
in general dimensions $n\ge 3$, $\Theta$ is the phase constant. Let $u$ be its solution, we denote $\lambda_1\ge\lambda_2\ge\cdots\ge\lambda_n$ the eigenvalues of $D^2u$. We are interested if the twist $f$ is only positive and Lipschitz continuous. When $f=1$ it is the classical special Lagrangian equation that originates in the special Lagrangian geometry of Harvey-Lawson \cite{HL82} and enjoys good regularity.
Equation (\ref{gslag1}) is equivalent to the following Hessian (quotient) equations with $(n-1)-$convex solutions ($n=3,4$; $\Theta=\frac\pi2,\pi$ respectively)
\begin{equation}\label{gslag2}
\left\{
\begin{array}{lll}
\sigma_2=f^2(x)      &    \ n=3, & (a)\\
\frac{\sigma_3}{\sigma_1}=f^2(x)      & \   n=3,4. & (b)\\
\end{array} \right.
\end{equation}

Following Yuan \cite{Yua06}, we call the phase $|\Theta|= \frac\pi2(n-2)$ critical ("$>$" supercritical and "$<$" subcritical) since the level set $\{\lambda\in \mathbb{R}^n|\arctan\lambda=c\}$ is convex only when $|c|\ge \frac\pi2(n-2)$.  Singular solutions on subcritical phases exist, see Nadirashvili-Vl$\breve{a}$du$\c{t}$ \cite{NV10}, Wang-Yuan \cite{WY13}, and Mooney-Savin \cite{MS23} for non $C^1$ examples.\\

We state our main results in the following:

\begin{theorem}\label{Thm_Upbd_critical}
Let $u$ be a smooth solution to equation (\ref{gslag1}) in $B_{4}(0)$ with $|\Theta|\ge \frac\pi2 (n-2)$, $n\ge 3$. Suppose that $f$  is smooth and positive in $B_{4}(0)$, then we have
$$
|D^2u(0)|\le C(n,\mathrm{osc}_{B_{3}(0)} u, ||f^{-1}||_{L^{\infty}(B_{3}(0))}, ||f||_{C^{0,1}( B_{3}(0))}).
$$
\end{theorem}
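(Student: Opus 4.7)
The plan is to adapt the integral Jacobi inequality method of Warren--Yuan for the classical special Lagrangian equation to the present twisted setting, carefully arranging the computation so that only first derivatives of $f$ ever appear. Set $\tilde\lambda_i := \lambda_i/f(x)$, so that $\sum_i \arctan\tilde\lambda_i \equiv \Theta$, and consider the Lagrangian graph $M = \{(x,Du(x))\}\subset\mathbb{R}^{2n}$ with the induced metric. In an orthonormal frame diagonalizing $D^2u$, the linearized operator at the equation is
$$
L := \sum_{i} g^{ii}\,\partial_i^2, \qquad g^{ii}=\frac{f^2}{f^2+\lambda_i^2}=\frac{1}{1+\tilde\lambda_i^2},
$$
which is the Laplace--Beltrami operator on $M$ up to a first-order error governed by the mean curvature of $M$; because $f=f(x)$ only, this mean curvature is controlled pointwise by $|Df|/f$.

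First, I would derive a twisted Jacobi inequality for $b=\log\sqrt{1+\tilde\lambda_1^2}$. Differentiating the equation once and twice in a direction $e$ yields identities of the schematic form $\sum g^{ii} u_{iie} = (\text{terms in }f,\partial_e f, \tilde\lambda_i)$ and, after a second differentiation, a Simons-type identity for $Lb$. The critical/supercritical phase hypothesis $|\Theta|\ge \tfrac\pi2(n-2)$ guarantees convexity of the level set $\{\mu:\sum\arctan \mu_i=\Theta\}$, and exactly this convexity produces the nonnegative ``Jacobi'' term $\sum_{i,j}g^{ii}g^{jj}(\tilde\lambda_1)_i(\tilde\lambda_1)_j\cdot(\text{nonneg.})$ as in Warren--Yuan; the new contributions from $f$ collect into a first-order error $\le C_0(1+|Db|)$ with $C_0=C_0(n,\|f^{-1}\|_\infty,\|f\|_{C^{0,1}})$. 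The essential point is that, when one differentiates the equation twice, second derivatives of $f$ always appear multiplied by $g^{ii}$ and can be re-expressed, via the once-differentiated equation, as controlled first-order quantities in $Df$ together with $g^{ii}u_{iie}$ terms that get absorbed into $Lb$. One thus obtains, on $\{b>\log\sqrt{2}\}$,
$$
Lb \;\ge\; c(n)\,g^{ii}g^{jj}b_i b_j \; -\; C_0.
$$

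Next, using Michael--Simon--Sobolev on $M$ (valid since the mean curvature of $M$ is $\le C\|Df\|_\infty /\inf f$), together with a standard cutoff supported in a ball of $\mathbb{R}^{2n}$, integration of the Jacobi inequality against $\eta^{2p}e^{pb}$ yields a reverse-H\"older iteration that upgrades an $L^1(M)$ bound on $e^{b}$ to an $L^\infty$ bound. The intrinsic volume of $M\cap B_R(0,Du(0))$ in turn is bounded by $\mathrm{osc}_{B_3} u$ and $\|f\|_\infty$ via the coarea formula applied to the projections to the $x$- and $y=Du$-factors (using that $u$ is $(n-1)$-convex in the equivalent Hessian-quotient formulation~(\ref{gslag2})). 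This delivers $|\lambda_1(0)|\le C$, which is the claimed estimate.

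The principal obstacle is the derivation of the twisted Jacobi inequality with only $C^{0,1}$ dependence on $f$: naively, a second differentiation of the equation produces $D^2f$, which is unavailable. The resolution is to differentiate only once in the right direction (namely the eigendirection of $\lambda_1$), and to obtain the second order information for $b$ via the Simons identity on $M$; in that framework, $f$ enters only through the ambient first-order coefficients of $L$ and through $Df$ coupled to $D^3u$ terms that can be dominated by $g^{ii}g^{jj}b_ib_j$. A secondary technical point is verifying MSS on $M$ with non-vanishing (but bounded) mean curvature, which is standard via Allard's generalization but requires tracking how $\|Df\|_\infty/\inf f$ enters the constants.
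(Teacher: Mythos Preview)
Your overall strategy---Jacobi inequality, Sobolev on the graph, integral iteration---matches the paper's architecture, but the central step is not right, and it is exactly the step you flag as the ``principal obstacle.'' When you differentiate the equation twice in $x_\gamma$ you obtain (see (\ref{eq_33}))
\[
\sum_i g^{ii}\lambda_{i,\gamma\gamma}
=\sum_i\frac1f\Big[2(ff_\gamma+\lambda_iu_{ii\gamma})(u_{ii\gamma}f-\lambda_if_\gamma)g^{ii}g^{ii}
+\lambda_i f_{\gamma\gamma}\,g^{ii}\Big],
\]
so $f_{\gamma\gamma}$ appears with coefficient $\tfrac1f\sum_i\lambda_i g^{ii}$, which is \emph{not} rewritable in terms of $Df$ via the once-differentiated equation (that identity only relates $\sum_i g^{ii}u_{ii\gamma}$ to $\sum_i g^{ii}\lambda_i f_\gamma/f$). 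Your fallback---getting the second-order information from the Simons identity on $M$---does not help: the ambient space here is $(\mathbb{R}^{2n},\,f^2dx^2+dy^2)$, whose curvature tensor already contains $D^2f$, so the Simons/Gauss--Codazzi route reintroduces exactly the terms you are trying to avoid. In short, there is no pointwise Jacobi inequality for a Lipschitz $f$.

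The paper's resolution is different from what you propose in two essential ways. First, it works with the \emph{trace} $b=\ln|A+\Delta u|$ rather than $\log\sqrt{1+\tilde\lambda_1^2}$; summing over $\gamma$ makes the bad term assemble into the divergence-friendly form $\tfrac{\Delta f}{f}\sum_i g^{ii}\lambda_i$ (Lemma~\ref{lemma_Trace_Jacobi}). Second, it never eliminates $\Delta f$ pointwise: the Jacobi inequality (\ref{eq_Trace_Jacobin1}) is kept with the $\Delta f$ term intact, and only after multiplying by a test function and integrating by parts does one trade $\Delta f$ for $Df$ (Step~1 of the proof, leading to (\ref{eq_11}) and the mean value inequality (\ref{eq_meanValue})). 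The Sobolev inequality you invoke is also not off-the-shelf Michael--Simon, since the graph sits in a non-flat ambient; the paper builds a bespoke monotonicity formula and Sobolev inequality on $(\mathcal M,g)$ in Section~2 for this purpose. If you want to salvage your outline, the fix is: (i) switch to the trace quantity, (ii) accept a $\Delta f$ term in the pointwise Jacobi and remove it only under the integral sign, and (iii) justify Sobolev/mean-value on the weighted graph rather than citing Allard.
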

\begin{theorem} \label{Thm2} Let u be a smooth solution to equation (\ref{gslag1}) in $B_{2}(0)$ with $|\Theta|\ge \frac\pi2 (n-2)$, $n\ge 3$. Suppose that $f$ is smooth and positive in $B_{2}(0)$, then we have
$$
\ |Du(0)| \leq C(n)(\|{Df}/{f}\|^2_{L^\infty(B_1(0))}+1)\mathop{osc}\limits_{B_1(0)} u.
$$
\end{theorem}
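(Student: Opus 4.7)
I would prove this gradient estimate by a Bernstein-type maximum principle argument applied to an auxiliary function combining $b := \tfrac12\log(1+|Du|^2)$, the solution $u$, and a cutoff for $B_1(0)$. Setting $F(D^2u,x) = \sum_i \arctan(\lambda_i/f(x)) - \Theta$, the linearization $F^{ij} = \partial F/\partial u_{ij}$ is diagonal in the eigenframe of $D^2u$ with entries $F^{ii} = f/(f^2+\lambda_i^2) > 0$. Differentiating $F\equiv 0$ in $e_k$ yields the first-order identity
$$\sum_i F^{ii}u_{iik} = f_k\sum_i\frac{\lambda_i}{f^2+\lambda_i^2},$$
whose right-hand side is bounded in absolute value by $\tfrac{n}{2}\|Df/f\|_{L^\infty}$ (using $|\lambda|/(f^2+\lambda^2)\le 1/(2f)$ by AM--GM). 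Likewise $Lu = \sum_i F^{ii}\lambda_i$ is bounded in absolute value by $n/2$.

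Computing $Lb$ at a point where $D^2u$ is diagonalized then gives the Jacobi-type identity
$$Lb = \frac{\sum_i F^{ii}\lambda_i^2 + \langle Du,Df\rangle\sum_i \lambda_i/(f^2+\lambda_i^2)}{1+|Du|^2} - \frac{2\sum_i F^{ii} u_i^2\lambda_i^2}{(1+|Du|^2)^2}.$$
The first term is nonnegative and, since $u_i^2\le|Du|^2$ and each $F^{ii}\lambda_i^2\le f$, it dominates the last term up to a harmless additive constant; by the first-order identity above, the middle term is bounded below by $-C(n)\|Df/f\|_{L^\infty}$. Thus $Lb \ge -C(n)\|Df/f\|_{L^\infty}$ modulo a nonnegative good term. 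Applying the maximum principle to
$$W(x) = \eta(x)^2\, b(x) + A\bigl(u(x)-\inf_{B_1} u\bigr),$$
with $\eta(x)=1-|x|^2$ and $A>0$ to be tuned, at an interior maximum $x_0$ the conditions $DW(x_0)=0$ and $LW(x_0)\le 0$ yield, after Cauchy--Schwarz absorption of the cross terms $\eta F^{ij}\eta_i b_j$ and $\langle Du,Df\rangle$ against the good term from the Jacobi inequality, a pointwise bound on $|Du(x_0)|$ of the desired form. Comparing $W(0)\le W(x_0)$ with $\eta(0)=1$ then transfers the estimate to the origin.

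The main obstacle is the algebraic balancing in the max-principle step, where one must manage three competing scales---the cutoff derivatives $|D\eta|/\eta$, the weight $\|Df/f\|_{L^\infty}$, and the unknown $|Du(x_0)|$---without introducing $f$-dependent constants of the wrong order. The quadratic dependence $\|Df/f\|^2_{L^\infty}$ in the conclusion emerges precisely from the Cauchy--Schwarz absorption of the mixed inhomogeneity term $\langle Du,Df\rangle\sum_i \lambda_i/(f^2+\lambda_i^2)$ against the good term $\sum_i F^{ii}\lambda_i^2/(1+|Du|^2)$. The supercritical phase hypothesis $|\Theta|\ge(n-2)\pi/2$ enters only implicitly, through the resulting $(n-1)$-convexity of $D^2u$ (at most one eigenvalue of $D^2u$ is negative), which keeps $\sum_i F^{ii}\lambda_i^2$ sufficiently large to close the estimate.
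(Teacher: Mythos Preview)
Your overall Bernstein-type strategy is the right framework and matches the paper's approach in spirit, but the specific choices of auxiliary function lead to a genuine gap that prevents the argument from closing.

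\textbf{The Bochner balance is wrong.} Your claim that in
\[
Lb=\frac{\sum_i F^{ii}\lambda_i^2}{1+|Du|^2}+\frac{\langle Du,Df\rangle\sum_i\lambda_i/(f^2+\lambda_i^2)}{1+|Du|^2}-\frac{2\sum_i F^{ii}u_i^2\lambda_i^2}{(1+|Du|^2)^2}
\]
``the first term dominates the last up to a harmless additive constant'' is not justified. Writing the difference as $(1+|Du|^2)^{-2}\sum_i F^{ii}\lambda_i^2\bigl(1+|Du|^2-2u_i^2\bigr)$, one sees that when $Du$ is concentrated in a single eigendirection $e_k$ with $|\lambda_k|$ large, this is negative. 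Bounding the last term absolutely via $F^{ii}\lambda_i^2\le f$ gives a constant of order $\|f\|_{L^\infty}$, not a dimensional constant; this is not ``harmless'' for the stated conclusion.

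\textbf{No coercive term.} More seriously, even granting your lower bound $Lb\ge G-C$ with $G=\frac{\sum_i F^{ii}\lambda_i^2}{1+|Du|^2}$, this ``good term'' satisfies $G\le nf/(1+|Du|^2)$ and hence \emph{decreases} as $|Du|\to\infty$. After absorbing cross terms into $G$, the inequality $LW(x_0)\le 0$ contains no term that grows with $|Du(x_0)|$; the linear piece $A\,Lu$ is merely bounded by $An/2$ with no definite sign. So the maximum-principle step cannot produce a pointwise bound on $|Du(x_0)|$.

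\textbf{What the paper does differently.} The paper takes $w=\eta|Du|+Au^2+Bu$ with $A=n/\mathrm{osc}\,u$ and $B=1+n\|Df/f\|_{L^\infty}$. With $|Du|$ instead of $\tfrac12\log(1+|Du|^2)$, the Bochner combination $\sum_i g^{ii}\lambda_i^2(|Du|^2-u_i^2)/|Du|^3$ is \emph{manifestly} nonnegative and simply dropped. The essential positive term comes instead from the \emph{quadratic} piece $Au^2$: one has $\overline\Delta_g(Au^2)\ge 2A\sum_i g^{ii}u_i^2\ge \tfrac{2}{nM}g^{nn}|Du|^2$ at the maximum (choosing coordinates with $u_n\ge|Du|/\sqrt n$), and this is the coercive term that closes the estimate. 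The phase hypothesis enters through a sharp angle lemma: if $\lambda_n<-f$ then $\lambda_i/(f^2+\lambda_i^2)+\lambda_n/(f^2+\lambda_n^2)\le 0$ for all $i$, which forces $|\sum_i\lambda_i g^{ii}|\le C(n)|\lambda_n|g^{nn}$ and lets every term be compared against $g^{nn}$. The quadratic $\|Df/f\|^2$ in the final bound arises from the product of the $B$ in the inhomogeneous term and the $B$ in the first-order bound $|\lambda_n|\le C(B+n)|Du|/\eta$, not from a single Cauchy--Schwarz as you suggest. A final rescaling $u\mapsto u/\|f\|_\infty$, $f\mapsto f/\|f\|_\infty$ removes residual $\|f\|_\infty$-dependence.
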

 A simple example $w=\frac1{\alpha(1+\alpha)}|x_1|^{1+\alpha}+\frac12(x_2^2+x_3^2)$ shows that Theorem \ref{Thm_Upbd_critical} is sharp for the quotient equation (\ref{gslag2}b). The situation for $\sigma_2$ equation remains open for us.
\begin{theorem}\label{coro_existence}
Let $f(x)\in C^{0,1}(\overline B_1(0))$, $f\ge1$ in $ B_{1}(0)$. Let $\varphi(x)\in C^0(\partial B_1(0)),\alpha\in (0,1)$. The following Dirichlet problem
$$ \left\{
\begin{array}{lcl}
F(D^2u,x)=\sum^n_{i=1}\arctan\frac{\lambda_i}{f}=\Theta      &      & B_1(0),\\
u=\varphi      &      & \partial B_1(0).
\end{array} \right. $$
has an unique solution $u\in C^0(\overline B_{1}(0))\cap C^{2,\alpha}(B_{1}(0))$ if the constant $\Theta\in[\frac\pi2(n-2),\frac\pi2n)$.
\end{theorem}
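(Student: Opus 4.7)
The plan is to solve the problem by approximation plus the interior estimates (Theorems \ref{Thm_Upbd_critical} and \ref{Thm2}), combined with Evans--Krylov and boundary barriers. Mollify $f$ to get $f_\varepsilon \in C^\infty(\overline{B_1})$ with $f_\varepsilon\ge 1$ and $\|f_\varepsilon\|_{C^{0,1}}\le \|f\|_{C^{0,1}}+o(1)$, and mollify $\varphi$ to get smooth $\varphi_\varepsilon$ converging uniformly to $\varphi$ on $\partial B_1$. For each $\varepsilon$, the classical Dirichlet problem
\[
F(D^2 u_\varepsilon,x)=\Theta \text{ in } B_1,\qquad u_\varepsilon=\varphi_\varepsilon \text{ on } \partial B_1
\]
is solvable by the continuity method: the supercritical condition $\Theta\ge\frac{\pi}{2}(n-2)$ makes $F$ concave in $D^2u$; a large multiple of $|x|^2-1$ added to $\varphi_\varepsilon$ is a subsolution (its Hessian is large, so $F$ approaches $\frac{\pi}{2}n>\Theta$), and the strict convexity of $\partial B_1$ together with smooth data gives the required second-order boundary estimates in the Caffarelli--Nirenberg--Spruck framework. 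This produces $u_\varepsilon \in C^\infty(\overline{B_1})$.

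Next I would assemble uniform a priori estimates. The comparison principle bounds $\mathrm{osc}_{B_1} u_\varepsilon$ by $\mathrm{osc}\,\varphi$ plus a constant from the barriers above. Theorems \ref{Thm_Upbd_critical} and \ref{Thm2} then yield uniform interior $C^{1,1}$ and $C^1$ bounds for $u_\varepsilon$ on every compact $K\Subset B_1$, depending only on $n$, $\mathrm{osc}\,\varphi$, $\|f^{-1}\|_\infty$, $\|f\|_{C^{0,1}}$, and $\mathrm{dist}(K,\partial B_1)$. Evans--Krylov, applicable because $F$ is concave in $D^2u$ on the supercritical branch, upgrades these to a uniform interior $C^{2,\alpha}$ bound. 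To control $u_\varepsilon$ up to $\partial B_1$ when $\varphi$ is only $C^0$, I would use at each $y\in\partial B_1$ a standard barrier of the form $\varphi_\varepsilon(y)\pm\omega_\varphi(|x-y|)\pm A(1-|x|^2)$, where the lower bound $f_\varepsilon\ge 1$ keeps $F$ uniformly away from the degenerate phases $\pm\frac{\pi}{2}n$ and ensures that for $A$ sufficiently large these are genuine super/subsolutions; comparison then delivers a uniform modulus of continuity for $u_\varepsilon$ up to the boundary.

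Arzel\`a--Ascoli and diagonal extraction give a subsequence converging in $C^{2,\alpha}_{\mathrm{loc}}(B_1)\cap C^0(\overline{B_1})$ to a limit $u$ which solves the equation classically in $B_1$ and attains $\varphi$ on $\partial B_1$. Uniqueness follows from the comparison principle for the concave (supercritical) operator with Lipschitz $x$-dependence. The step I expect to be the main obstacle is producing the uniform boundary modulus of continuity: the barrier construction has to be arranged so that the rate at which $\varphi_\varepsilon(y)\pm\omega_\varphi(|x-y|)$ fails to satisfy the equation is dominated by the $A(1-|x|^2)$ correction, and this matching depends on the interplay between $\omega_\varphi$ and the Lipschitz constant of $f$. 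Everything else is a direct approximation/compactness argument driven by the paper's interior theorems.
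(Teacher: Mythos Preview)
Your approach matches the paper's: it proves a smooth-data existence result (Theorem~\ref{prop_existence}) for $f\in C^2$, $\varphi\in C^4$ via the continuity method with CNS-type boundary estimates, then passes to $f\in C^{0,1}$ and $\varphi\in C^0$ by approximation, using the interior $C^{1,1}$ estimates of Theorems~\ref{Thm_Upbd_critical} and~\ref{Thm2} together with an interior $C^{2,\alpha}$ estimate. One caution on the last point: for the uniform-in-$\varepsilon$ interior $C^{2,\alpha}$ bound you should invoke Caffarelli~\cite{Caf89} (as the paper does) rather than classical Evans--Krylov, since you need the estimate to depend only on $\|f_\varepsilon\|_{C^{0,1}}$ and not on higher derivatives of $f_\varepsilon$ that blow up as $\varepsilon\to 0$.
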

\noindent  One application of the above results is the interior regularity for $C^0$ viscosity solutions to equation (\ref{gslag2}a),
\begin{theorem}[Interior Regularity] \label{Thm_Regular_sigma2_3d}
Let $u$ be a $C^0$ viscosity solution for three dimensional equation $\sigma_2=f^2(x)$ in $ B_{1}(0)$. Suppose that $f\in C^{0,1}( B_{1}(0))$, $f>0$ in $ B_{1}(0)$. Then $u\in C^{2,\alpha}(B_1(0))$ for each $\alpha\in(0,1)$, and
$$
||u||_{C^{2,\alpha}( B_{\frac12}(0))}\le C(3, \alpha, \mathrm{osc}_{B_{\frac34}(0)} u,||f^{-1}||_{L^\infty( B_{\frac34}(0))}, ||f||_{C^{0,1}(\overline B_{\frac34}(0))}).
$$
\end{theorem}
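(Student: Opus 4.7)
The plan combines the interior a priori $C^{1,1}$ estimate of Theorem \ref{Thm_Upbd_critical} with the solvability of the Dirichlet problem in Theorem \ref{coro_existence} to approximate the viscosity solution $u$ by smooth solutions, deduce a uniform $C^{1,1}$ bound, identify the limit with $u$ via viscosity comparison, and finally bootstrap from $C^{1,1}$ to $C^{2,\alpha}$ by standard concave uniformly elliptic theory.

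First I would normalize so that Theorem \ref{coro_existence} applies directly: the rescaling $v = u/c$ with $c = \inf_{B_{3/4}} f$ turns $\sigma_2(D^2 u) = f^2$ into $\sigma_2(D^2 v) = \tilde f^2$ with $\tilde f := f/c \ge 1$. Via (\ref{gslag2}a) this is the generalized special Lagrangian equation (\ref{gslag1}) with $n=3$ and critical phase $\Theta = \pi/2$. Fix $r$ slightly less than $3/4$, mollify $\tilde f$ to smooth $\tilde f_\varepsilon \ge 1$ with $\|\tilde f_\varepsilon\|_{C^{0,1}} \le \|\tilde f\|_{C^{0,1}}$, and invoke Theorem \ref{coro_existence} on $B_r$ with the continuous boundary data $v|_{\partial B_r}$ to obtain solutions $v_\varepsilon \in C^0(\overline B_r) \cap C^{2,\alpha}(B_r)$; Schauder bootstrapping using the smoothness of $\tilde f_\varepsilon$ upgrades each $v_\varepsilon$ to $C^\infty(B_r)$. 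Theorem \ref{Thm_Upbd_critical}, after an affine rescaling to $B_r$, then yields an $\varepsilon$-independent bound on $\|D^2 v_\varepsilon\|_{L^\infty(B_{1/2})}$, depending only on $\mathrm{osc}\, v|_{\partial B_r}$, $\|\tilde f\|_{C^{0,1}}$, and $\|\tilde f^{-1}\|_{L^\infty}$.

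The uniform $C^{1,1}$ bound gives, along a subsequence, $v_\varepsilon \to \tilde v$ in $C^{1,\beta}_{\mathrm{loc}}(B_r)$ for every $\beta < 1$, with $\tilde v \in C^{1,1}_{\mathrm{loc}}(B_r)$. A uniform modulus of continuity at $\partial B_r$, obtained from standard barriers for the $2$-convex equation and independent of $\varepsilon$, promotes this to uniform convergence on $\overline B_r$ with $\tilde v|_{\partial B_r} = v|_{\partial B_r}$. Stability of viscosity solutions under uniform convergence of the operators then shows $\tilde v$ is a $2$-admissible viscosity solution of $\sigma_2(D^2 \tilde v) = \tilde f^2$ on $B_r$. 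The comparison principle for the $2$-Hessian equation on its $2$-convex cone (equivalently, for the concave arctan formulation at critical phase) forces $\tilde v \equiv v$, and unwinding the rescaling gives $u \in C^{1,1}_{\mathrm{loc}}(B_{3/4})$ with the required bound.

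For the upgrade to $C^{2,\alpha}$: with $D^2u \in L^\infty$ on the $2$-convex cone, the operator $F(D^2u,x) := \sum_i \arctan(\lambda_i/f)$ is uniformly elliptic and concave in $D^2u$; Evans-Krylov gives $u \in C^{2,\beta}_{\mathrm{loc}}$ for some $\beta > 0$. Differentiating the equation in a direction $e$ produces a linear uniformly elliptic equation $a^{ij}(x) D_{ij} u_e = g_e(x)$ with $C^\beta$ coefficients and $L^\infty$ right-hand side (using $f \in C^{0,1}$); Calder\'on-Zygmund $W^{2,p}$ theory upgrades $u_e$ to $C^{1,\alpha}_{\mathrm{loc}}$ for every $\alpha < 1$, proving $u \in C^{2,\alpha}(B_{1/2})$ for all such $\alpha$. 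The main obstacle I foresee is the identification step: carefully matching the viscosity notion of solution used in the hypothesis with a comparison principle in the $2$-admissible class for merely Lipschitz $f$, so that $\tilde v \equiv v$ is fully rigorous; the $C^{2,\alpha}$ bootstrap and the a priori step are then essentially mechanical.
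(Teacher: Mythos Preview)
Your proposal is correct and follows essentially the same route as the paper: approximate by solutions of Dirichlet problems with mollified data, invoke Theorem~\ref{Thm_Upbd_critical} for a uniform interior $C^{1,1}$ bound, identify the limit with $u$ via a viscosity comparison principle, and upgrade to $C^{2,\alpha}$ by concave uniformly elliptic theory. The only differences are in the specific references invoked at each step---the paper points to \cite{CNS85} for existence, \cite{Tru90} for the comparison principle (which disposes of exactly the obstacle you flagged), and \cite{Yua01}/\cite{Caf89} for the $C^{2,\alpha}$ step in place of your Evans--Krylov plus Calder\'on--Zygmund bootstrap---but the architecture of the argument is identical.
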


In the past, various integral techniques combined with Sobolev inequalities were developed in gradient estimates, for minimal surface equations and quasilinear equations, in Bombieri-De Giorgi-Miranda [BDM69], Ladyzhenskaya-Ural'tseva \cite{LU70}, Simon \cite{Sim76}, and also Trudinger \cite{Tru72} \cite{Tru73}. These approaches were further explored for Hessian estimates to special Lagrangian equations in Warren-Yuan \cite{WY09}, Chen-Warren-Yuan \cite{CWY09} and Wang-Yuan \cite{WY11}.\\

The generalized special Lagrangian equation (\ref{gslag1}) was first brought up by Yu Yuan in the study of $\sigma_2$ equations with variable right hand sides. In an earlier joint work with Yingfeng Xu, we tried to get Hessian estimates with $f\in C^{1,1}$ in Wang-Yuan's way. In this study we push our goal further, using integral techniques to reduce the minimal regularity of $f$ to Lipschitz in Hessian estimates. Previously this is known to be $C^{1,1}$ for (\ref{gslag2}), and some times also requires strict convex solutions. The $C^{1,1}$ condition is imageable, since generally one need to differentiate the equation twice to see the behavior of $D^2u$, but then the second order derivatives of $f$ appear. A key observation in this work is that the trace of $D^2u$ admits a strong subharmonicity, say Jacobi inequality on the Lagrangian graph which contains a divergence structure $\Delta f$. This gives us an advantage in estimating the Hessian, since under integral sense we have a chance to bound the term containing $\Delta f$ with the Lipschitz norm of $f$.\\

To perform such idea, two basic tools in need will be Sobolev inequality and mean value inequality on the Lagrangian graph $(x,Du)$. Given the special form of (\ref{gslag1}), it is natural to consider it in the weighted Euclidean space $(\mathbb{R}^n \times {\mathbb{R}}^n,\overline g= f^2(x)dx^2+dy^2)$ (Qiu \cite{Qiu17} made this observation when deriving Hessian estimates for (\ref{gslag2}a) for $ C^{1,1}$ $f(x)$). Recall we have these two inequalities on submanifolds of $\mathbb{R}^n$ in Michael-Simon \cite{MS73}, and on general manifolds in Hoffman-Spruck \cite{HS74}. The later requires a prior information for sectional curvature and volume, which are not available in our situation. We develop these tools by introducing an intuitive vector field on the Lagrangian graph. It determines regions proportional to geodesic balls on the Lagrangian graph, and leads to a monotonicity formula only concerning with the weighted volume. The weighted volume is in fact the integral of nonnegative sysmetric functions $\sigma_k$ each having a divergence structure and can be calculated. Then we use the covering in Michael-Simon \cite{MS73} to get the desired results.\\

Once we get the $C^{1,1}$ estimates, the equations become uniformly elliptic.  We refer to a prior $C^{2,\alpha}$ estimates by Yu Yuan technical in \cite{Yua01}, for equation (\ref{gslag2}a) with $C^\alpha$ $f$. The existence results can be found in Caffarelli-Nirenberg-Spruck \cite{CNS85}, Theorem 1.3, the comparison principle for continuous viscosity solutions is in Trudinger \cite{Tru90}, Theorem 2.2. Then the interior $C^{2,\alpha}$ regularity can be obtained by a standard approximating process.\\

The $C^{2,\alpha}$ estimates for equation (\ref{gslag1}) follow from the $C^{1,1}$ estimates and Caffarelli \cite{Caf89}. For completeness of the story, we also establish a Dirichlet problem for (\ref{gslag1}), following Caffarelli-Nirenberg-Spruck \cite{CNS85} and a lecture note on special Lagrangian equations by Yu Yuan, 2016.\\

Heinz \cite{Hei59} derived a Hessian bound for Monge-Amp\`ere type equation including equation (\ref{gslag1}) in dimension 2. Pogorelov \cite{Pog78} and Chou-Wang \cite{CW01} established Hessian estimates to Monge-Amp\`ere equations and $\sigma_k$ equations for $k\ge 2$ with certain strict convexity constraints.
Urbas \cite{Urb00} \cite{Urb01} obtained Hessian estimates for $W^{2,p}$ weak solutions to $\sigma_k$ equations in
terms of certain Hessian integrals. Bao-Chen-Guan-Ji \cite{BCGJ03} obtained pointwise Hessian estimates to
strictly convex solutions for quotient equations $\sigma_n/\sigma_k$ in terms of certain integrals of the Hessian. Pointwise Hessian estimates for $\sigma_2=1$ in dimension 3 were obtained in Warren-Yuan \cite{WY09}. Qiu \cite{Qiu17} ($n=3$) also Guan-Qiu \cite{GQ19} ($u$ is convex or $\sigma_3>-A$) proved Hessian estimates for
quadratic Hessian equations with a $C^{1,1}$ variable right hand side. Hessian estimates for semi-convex solutions to $\sigma_2=1$ were
derived by a compactness argument in McGonagle-Song-Yuan \cite{MSY19}. Shankar-Yuan \cite{SY20a} \cite{SY20b} \cite{SY22} proved regularity for almost convex viscosity solutions, semi-convex solutions and semi-convex entire solutions to $\sigma_2=1$ respectively. Mooney \cite{Moo21} proved regularity for convex viscosity solutions to $\sigma_2=1$ using a new approach.
\begin{notation}
\indent The following notations will be used throughout this paper.
  \\ \indent  $u_i=\partial_iu,$ $u_{ij}=\partial_{ij}u,$ and $\lambda_{i,j}=\partial_j\lambda_i, \lambda_{i,jk}=\partial_{jk}\lambda_i$, etc.
%\\ \indent $(\mathcal{M},g)$ denotes the Lagrangian graph $(x,Du)\in \mathbb{R}^n\times \mathbb{R}^n$ with induced metric $g$.
% \\ \indent $\Delta_g$ is the Beltrami-Laplace operator on $\mathcal{M}$.
   \\ \indent $V=\Pi^n_{i=1}\sqrt{1+\lambda_i^2}$ is the volume element.
   \\ \indent $T_{g^{-1}}=\sum^n_{i=1}\frac1{1+\lambda_i^2}$ is the trace of the inverse metric.
      \\ \indent $|f|_\infty$ is the $L^\infty$ norm of $f$ when the region is implicit.
   \\ \indent $C_f$ denotes various constants depending only on $n$ and $||f||_{C^{0,1}}$.
%   \\ \indent Einstein summing convention will be used only for indexes $i,j$.
%   \\ \indent $C$ will denote constants depending only on dimension n unless specified additionally.
\end{notation}

\section{ preliminary inequalities on the Lagrangian graph}
\indent In this section we will study the geometric futures of the Lagrangian graph $(x,Du)$, and prove two basic inequalities on it: Sobolev inequality and mean value inequality. Here we consider the general dimension $n\ge 3$.
\subsection{Preliminaries}
\indent We introduce in the following the weighted Euclidean space $(\mathbb{R}^n \times {\mathbb{R}}^n,\overline g= f^2(x)dx^2+dy^2)$.
Let $z=(x,y)$, $x,y\in \mathbb{R}^n$, the Christoffel symbols of the weighted Euclidean space are
$$
\begin{aligned}
\overline\Gamma^k_{ij}&=\frac12{\overline g^{kl}}(\frac{\partial \overline g_{lj}}{\partial{z_i}}+\frac{\partial \overline g_{il}}{\partial{z_j}}-\frac{\partial \overline g_{ij}}{\partial{z_l}}) \\
   &= \frac1f(f_i\delta_{kj}+f_j\delta_{ik}-f_k\delta_{ij}).
\end{aligned}
$$
We have $\overline\Gamma^k_{ij}=0$ when one of the indexes $k,i,j>n$.
Equipped with a metric $g$ induced from the weighted Euclidean space, the graph $(\mathcal{M},g)$ becomes a submanifold which is not necessarily minimal, in contrast to the classical Lagrangian graph in Euclidean space. The tangent vector of $\mathcal{M}$ is
$$
\partial_i=\partial_i(x,Du)=\partial_{x_i} + \sum_\alpha D_\alpha u_i\partial_{y_\alpha},
$$
where $\partial_{x_i}$, $\partial_{y_\alpha}$ are unit tangent vectors $\mathbf{e}_i\in\mathbb{R}^{2n}$, $\mathbf{e}_{\alpha+n}\in\mathbb{R}^{2n}$ respectively.
 Choose a coordinate system such that $D^2u$ is diagonal at a point $p\in \mathcal{M}$, the induced metric is
$$ g_{ij}=f^2\delta_{ij}+Du_i\cdot Du_j\stackrel{p}{=}(f^2+\lambda_i^2)\delta_{ij}.$$
The the Beltrami-Laplace operator on $\mathcal{M}$ is given by
\begin{equation}\label{eq_Delta_g}
\begin{aligned}
\Delta_g &=\frac{1}{\sqrt g}\partial_j(\sqrt gg^{ij}\partial_i)
         =g^{ij}\partial_{ij}+\partial_jg^{ij}\partial_i+\frac{1}{2g}\partial_jg g^{ij}\partial_i \\
         &=g^{ij}\partial_{ij}-g^{il}g^{sj}\partial_j g_{ls} \partial_i + \frac{1}{2}g^{ls}g^{ij}\partial_j g_{ls}\partial_i\\
         &\stackrel{p}{=}g^{ii}\partial_{ii}+g^{ii}g^{jj}(ff_i-2ff_j\delta_{ij}-\lambda_i\lambda_{j,i})\partial_i\\
         &\stackrel{\Delta}{=}g^{ii}\partial_{ii}+\psi^ig^{ii}\partial_i,
\end{aligned}
\end{equation}
where $\psi^i\stackrel{p}{=}\sum_{j=1}^n g^{jj}(ff_i-2ff_j\delta_{ij}-\lambda_i\lambda_{j,i})$. Next we calculate the mean curvature of $\mathcal{M}$.
The Riemannian connection $\overline \nabla$ on the weighted Euclidean space $\mathbb{R}^n \times {\mathbb{R}}^n$ is
$$
\begin{aligned}
\overline \nabla_{\partial_i}{\partial_j} &=\overline \nabla_{(\partial_{x_i}+\sum_\alpha D_\alpha u_i\partial_{y_\alpha})}
   {(\partial_{x_j}+\sum_\beta D_\beta u_j\partial_{y_\beta})}
  {=}\overline \Gamma^k_{ij}\partial_{x_k}+\sum_\beta D_\beta u_{ij}\partial_{y_\beta}\\
  &{=}\sum_k \frac1f(f_i\delta_{kj}+f_j\delta_{ik}-f_k\delta_{ij})(e_k,0)+\sum_\beta D_\beta u_{ij}(0,e_\beta).\\
\end{aligned}
$$
Let $e_s\in \mathbb{R}^n$ be the unit vector, the unit normal vector in $(T\mathcal{M})^\perp$ is
$$N_s=\frac{(-Du_s,f^2e_s)}{f\sqrt{|Du_s|^2+f^2}}\stackrel{p}{=}\frac{(-\lambda_se_s,f^2e_s)}{f\sqrt{\lambda_s^2+f^2}}.$$
The mean curvature of $\mathcal{M}$ is
$$
\begin{aligned}
H&\stackrel{p}=g^{ii}<\overline \nabla_{\partial_i}{\partial_i},N_j>N_j
 \stackrel{p}{=}f^{-2}g^{ii}g^{jj}<\overline \nabla_{\partial_i}{\partial_i}, \ (-\lambda_je_j,f^2e_j)>(-\lambda_je_j,f^2e_j)\\
 &\stackrel{p}{=}\sum_k< \frac1f(2f_i\delta_{ki}-f_k)(e_k,0)+ \lambda_{i,k}(0,e_k),\ (-\lambda_je_j,f^2e_j)>
 f^{-2}g^{ii}g^{jj}(-\lambda_je_j,f^2e_j)\\
&\stackrel{p}{=}g^{ii}g^{jj}[\frac1f\lambda_j(-2f_i\delta_{ji}+f_j)+ \lambda_{i,j}](-\lambda_je_j,f^2e_j).
\end{aligned}
$$
\begin{lemma}\label{lemma_00}
The mean curvature of the gradient graph $\mathcal{M}$ is bounded by
\begin{align*}
|H|\le C(n)|Df|\sum^n_{i=1}\frac1{f^2+\lambda_i^2}=C(n)|Df|T_{g^{-1}}.
\end{align*}
\begin{proof}
Choose a coordinate system such that $D^2u$ is diagonal at a point $p\in \mathcal M$. Differentiate equation (\ref{gslag1}) once with respect to $x_j$  at $p$:
\begin{equation}\label{eq_uijj}
\sum_{i=1}^n \frac{\lambda_{i,j}f-\lambda_i f_j}{f^2+\lambda_i^2}=0,\ \ or\
\lambda_{i,j}g^{ii}=\frac1f\lambda_i f_jg^{ii}.
\end{equation}
By the formula of $H$ and (\ref{eq_uijj}), we have
$$
\begin{aligned}
|H|&\stackrel p{=}
g^{ii}g^{jj}\frac1f[\lambda_j(-2f_i\delta_{ji}+f_j)+ \lambda_if_j](-\lambda_je_j,f^2e_j)|
\le C(n)|Df|\sum^n_{i=1}\frac1{f^2+\lambda_i^2}.
\end{aligned}
$$
\end{proof}
\end{lemma}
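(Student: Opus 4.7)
The plan is to take the vector formula for $H$ obtained in the computation immediately preceding the lemma, eliminate its dependence on the third derivatives of $u$ via a once-differentiated form of the equation, and then bound what remains by elementary inequalities in the eigenvalues.

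First I would differentiate (\ref{gslag1}) in $x_j$ and evaluate at the point $p$ where $D^2u$ is diagonal. Chain-ruling the $\arctan$ and using $\partial_{x_j}(\lambda_i/f) = (\lambda_{i,j}f - \lambda_i f_j)/f^2$ gives the first-order compatibility identity
$$
\sum_{i=1}^n g^{ii}\bigl(\lambda_{i,j} f - \lambda_i f_j\bigr) = 0,
$$
i.e. (\ref{eq_uijj}). This is exactly the relation that trades the a priori uncontrolled third derivatives $\lambda_{i,j}$ against the Lipschitz data $f_j$, modulated by metric factors already present. Crucially, the contribution of $\lambda_{i,j}$ in the derived formula for $H$ enters inside a sum $\sum_i g^{ii}g^{jj}\lambda_{i,j}(-\lambda_j e_j, f^2 e_j)$, so the identity lets me replace $\lambda_{i,j}$ by $\lambda_i f_j/f$ inside the inner $i$-sum without changing the total. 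After this replacement $H$ depends only on $f$, $Df$, and the eigenvalues $\lambda_i$.

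It then remains to estimate the resulting vector in the Euclidean norm. The elementary bounds I would invoke are
$$
g^{ii}|\lambda_i| \le \frac{1}{2f}, \qquad g^{ii} f^2 \le 1,
$$
both immediate from $g^{ii} = 1/(f^2+\lambda_i^2)$ together with the AM--GM inequality $f^2+\lambda_i^2 \ge 2f|\lambda_i|$. Splitting the bracket $[\lambda_j(-2f_i\delta_{ji}+f_j)+\lambda_i f_j]$ into its three pieces and handling the resulting double sums separately, I expect to pull out one power of $|Df|$ from each $f_i$ or $f_j$ factor, absorb every $j$-summation into a dimensional constant $C(n)$, and leave intact precisely one factor $\sum_i g^{ii} = T_{g^{-1}}$.

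The main obstacle is bookkeeping rather than any subtle estimate: the $H$-formula is a vector-valued double sum, and the three pieces of the bracket must be grouped so that exactly one copy of $\sum_i g^{ii}$ survives while every other $j$-sum is controlled independently of the Hessian (using in particular $|\sum_i g^{ii}\lambda_i|\le n/(2f)$). Once this grouping is done correctly, collecting terms yields the claimed inequality $|H|\le C(n)|Df|\, T_{g^{-1}}$.
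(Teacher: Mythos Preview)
Your approach is exactly the paper's: differentiate (\ref{gslag1}) once to get (\ref{eq_uijj}), substitute into the $H$-formula so that the third derivatives disappear, and bound the remaining algebraic expression termwise. The paper does not spell out the elementary inequalities, so your more detailed plan is a faithful expansion of what is written there.

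One small technical slip: the bound $\bigl|\sum_i g^{ii}\lambda_i\bigr|\le n/(2f)$ you single out is not the right one for the piece $\sum_{i,j} g^{ii}g^{jj}\tfrac{\lambda_i f_j}{f}(-\lambda_j e_j,f^2 e_j)$. Using it produces a factor $\tfrac{1}{f}\sqrt{T_{g^{-1}}}$, which is \emph{not} controlled by $T_{g^{-1}}$ when all eigenvalues are large. The bound you actually need there is the equally elementary
\[
g^{ii}|\lambda_i|\;=\;\frac{|\lambda_i|}{f^2+\lambda_i^2}\;\le\;\frac{1}{\sqrt{f^2+\lambda_i^2}}\;=\;\sqrt{g^{ii}},
\]
which gives $\bigl|\sum_i g^{ii}\lambda_i\bigr|\le \sqrt{n\,T_{g^{-1}}}$ and, combined with $\bigl(\sum_j g^{jj}f_j^2\bigr)^{1/2}\le |Df|\sqrt{T_{g^{-1}}}$, yields exactly $C(n)|Df|\,T_{g^{-1}}$ for that piece. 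With this adjustment your bookkeeping goes through as you describe.
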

Notice that if $Df\equiv0$, then $H\equiv0$.

\subsection{Sobolev inequality on $M$}
\ \\
\indent We introduce the following definition from Hoffman-Spruck \cite{HS74}:
\begin{definition}\label{def_01}
The divergence of a vector field X: $\mathcal{M}\rightarrow T(\mathbb{{R}}^{n}\times\mathbb{{R}}^{n})$ on $\mathcal{M}$ is the trace of $\overline \nabla X$ on $T\mathcal{M}$. In local coordinates, we have
$$
\mathrm{div}_\mathcal{M}X=\sum_{i,j=1}^ng^{ij}<\overline \nabla_{\partial_i}X,\ \partial_j>.
$$
\end{definition}
\begin{lemma}[Hoffman-Spruck \cite{HS74}]
If $X:M\rightarrow T(\mathbb{R}^n\times\mathbb{R}^n)$ is a vector field on $\mathcal{M}$, $f$ is a $C^1$ function on ${\mathcal{M}}$, then

$\mathrm{\romannumeral1)}$ $\int_{\mathcal{M}} \mathrm{div}_{\mathcal{M}}X dv_g=-\int_{\mathcal{M}}<X,H> dv_g+\int_{\mathcal{\partial M}}<X,\nu>dv_g$,\\
\indent $\mathrm{\romannumeral2)}$ $\mathrm{div}_{\mathcal{M}}fX=f\mathrm{div}_{\mathcal{M}}X+<X,\nabla_g f>$,\\
where $\nu$ is the exterior normal vector field to $\mathcal{M}$ on $\partial \mathcal{M}$.
\end{lemma}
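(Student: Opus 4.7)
The plan is to recognize this as the standard first variation / divergence theorem on an isometrically immersed submanifold (here $\mathcal{M} \hookrightarrow (\mathbb{R}^n \times \mathbb{R}^n, \overline g)$), and to reduce it to the intrinsic divergence theorem on $(\mathcal{M}, g)$ by splitting $X$ into tangential and normal components.

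\textbf{Step 1: The product rule (ii).} This is immediate from the Leibniz property of the ambient connection $\overline\nabla$. In local coordinates,
\begin{align*}
\mathrm{div}_{\mathcal M}(fX)
&= \sum_{i,j} g^{ij}\bigl\langle \overline\nabla_{\partial_i}(fX),\, \partial_j \bigr\rangle
 = \sum_{i,j} g^{ij}(\partial_i f)\langle X,\partial_j\rangle + f\sum_{i,j} g^{ij}\bigl\langle \overline\nabla_{\partial_i}X,\, \partial_j\bigr\rangle.
\end{align*}
Since $\nabla_g f = \sum_{i,j} g^{ij}(\partial_i f)\,\partial_j$, the first sum equals $\langle X,\nabla_g f\rangle$, giving (ii).

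\textbf{Step 2: Decompose and apply Gauss.} For (i), write $X = X^T + X^\perp$ relative to $T\mathcal M$ and $(T\mathcal M)^\perp$. For the tangential piece, the Gauss formula $\overline\nabla_{\partial_i}X^T = \nabla_{\partial_i}X^T + II(\partial_i, X^T)$ together with $II \perp T\mathcal M$ gives
\[
\sum_{i,j} g^{ij}\langle \overline\nabla_{\partial_i} X^T,\partial_j\rangle \;=\; \mathrm{div}_g X^T.
\]
For the normal piece, differentiating the identity $\langle X^\perp,\partial_j\rangle \equiv 0$ in the direction $\partial_i$ yields
\[
\langle \overline\nabla_{\partial_i}X^\perp,\partial_j\rangle = -\langle X^\perp,\overline\nabla_{\partial_i}\partial_j\rangle = -\langle X^\perp, II(\partial_i,\partial_j)\rangle,
\]
and tracing against $g^{ij}$ and using $H = \mathrm{tr}_g II$ produces
\[
\sum_{i,j} g^{ij}\langle \overline\nabla_{\partial_i}X^\perp,\partial_j\rangle = -\langle X^\perp,H\rangle = -\langle X,H\rangle,
\]
the last equality because $H \in (T\mathcal M)^\perp$. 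Summing the two contributions,
\[
\mathrm{div}_{\mathcal M} X \;=\; \mathrm{div}_g X^T - \langle X,H\rangle.
\]

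\textbf{Step 3: Integrate and apply the intrinsic divergence theorem.} Integrating over $\mathcal M$ and invoking the classical divergence theorem on the Riemannian manifold $(\mathcal M,g)$ for the tangential vector field $X^T$ gives
\[
\int_{\mathcal M} \mathrm{div}_g X^T \,dv_g = \int_{\partial\mathcal M} \langle X^T,\nu\rangle\, dv_g = \int_{\partial\mathcal M}\langle X,\nu\rangle\, dv_g,
\]
where in the last step we used that $\nu \in T\mathcal M$, so $\langle X^\perp,\nu\rangle = 0$. Combining with Step 2 yields (i).

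\textbf{Main obstacle.} There is no genuine analytic difficulty here; the content is purely formal and consists in tracking the distinction between $\overline\nabla$ (the Levi-Civita connection of the weighted ambient metric $\overline g = f^2 dx^2 + dy^2$) and $\nabla_g$ (the induced connection on $\mathcal M$). The one subtlety is that since $\overline g$ is not flat, $H$ is computed using $\overline\nabla$ and the Christoffel symbols $\overline\Gamma^k_{ij}$ already listed in the preliminaries; this is exactly the mean curvature used in Lemma~2.1, so the formulas are consistent.
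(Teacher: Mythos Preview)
Your proof is correct and is the standard argument. Note, however, that the paper does not supply its own proof of this lemma: it is stated with attribution to Hoffman--Spruck \cite{HS74} and used as a black box, so there is no ``paper's proof'' to compare against. What you have written is essentially the argument one finds in the cited reference, carried out in the slightly more general setting of a non-flat ambient metric $\overline g$; your remark that the Gauss/Weingarten decomposition goes through unchanged because everything is phrased via $\overline\nabla$ is exactly the point.
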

Fix a point $z^0\in \mathcal{M}$, consider the vector field $z\in \mathcal{M}\rightarrow T(\mathbb{{R}}^{n}\times \mathbb{{R}}^{n})$: $Z(z,z_0)=\sum_{l=1}^{2n}(z_l-z_l^0)\partial_{z_l}$, and denote $r=|Z|$. Notice that it is not necessarily the usual radial vector field unless $f$ is a constant. We are interested in the divergence of $Z$. Choose a coordinate system such that $D^2u$ is diagonal at $z$, by definition \ref{def_01} we have
$$
\begin{aligned}
\mathrm{div}_\mathcal{M}\partial_{z_l}&=\sum^n_{i,j=1}g^{ij} <\overline \nabla_{\partial_i}\partial_{z_l},\ \partial_j>
=\sum^n_{i,j=1}\sum^{2n}_{k,s=1}g^{ij} <\overline \nabla_{\frac{\partial z_k}{\partial x_i}\partial_{z_k}}\partial_{z_l},\
\frac{\partial z_s}{\partial x_j}\partial_{z_s}>\\
&=\sum^n_{i,j=1}\sum^{2n}_{k,m,s=1}g^{ij}\overline g_{ms}\frac{\partial z_k}{\partial x_i}\frac{\partial z_s}{\partial x_j}\overline\Gamma^m_{kl}.
\end{aligned}
$$
Notice that $\overline \Gamma^m_{kl}=0$ when one of the indexes $m,k,l>n$, we have
\begin{equation}\label{eq_temp_21}
\begin{aligned}
\mathrm{div}_\mathcal{M}Z&=\sum^{2n}_{l=1}(z_l-z^0_l) \mathrm{div}_\mathcal{M}\partial_{z_l}+\sum^{2n}_{l=1}<\partial_{z_l}, \nabla_g (z_l-z_l^0)>\\
&=\sum^{2n}_{l=1}(z_l-z^0_l) \mathrm{div}_\mathcal{M}\partial_{z_l}+\sum^{2n}_{l,k=1}\sum^{n}_{i,j=1}<g^{ij} \frac{\partial{z_l}}{\partial{x_i}}\frac{\partial{z_k}}{\partial{x_j}}\partial{z_k},\partial_{z_l}>\\
&\stackrel z{=}n+f^2\sum^n_{i,l=1}(z_l-z^0_l)g^{ii}\overline\Gamma^i_{il}.
\end{aligned}
\end{equation}

In the following we will prove the main theorem in this section.

\begin{prop}[Sobolev inequality on $\mathcal{M}$]\label{prop_Sobolev}
Let $\phi$ be a non-negative $C^1(U)$ function vanishing outside a compact subset of $U\subset \mathcal{M}$. Suppose that $f\ge 1$. Then there exists a constant $C=C(n,||f||_{C^{0,1}(U)})$ such that
$$
[\int_{\mathcal{M}} \phi^{\frac n{n-1}}dv_g]^{\frac{n-1}n}\le C[1+(\int_{\mathrm{supp}\ \phi}T_{g^{-1}}dv_g)^{\frac1{n(n-1)}}]\int_{\mathcal{M}}[|\nabla_g\phi|+\phi(|H|+\sqrt{T_{g^{-1}}})]dv_g.
$$
\end{prop}
\begin{lemma} \label{lemma_01}
Suppose $\chi$ is a non-decreasing $C^1(\mathbb{R})$ function that $\chi(t)=0$ when $t\le 0$. Let $z_0\in \mathcal{M}$, $\rho>0$ be a constant, define functions $\varphi_0,\ \psi_0,$ by
$$
\begin{aligned}
&\varphi_0(\rho)=\int_\mathcal{M}\phi(x)\chi(\rho-r)dv_g,\\
&\psi_0(\rho)=\int_\mathcal{M}[|\nabla_g\phi(x)|+\phi(x)(|H|+\sqrt{T_{g^{-1}}})]\chi(\rho-r)dv_g,
\end{aligned}
$$
%$$
%\overline \varphi_0(\rho)=\int_{S_\rho(z_0)}\phi(x)dv_g,
%$$
%$$
%\overline\psi_0(\rho)=\int_{S_\rho(z_0)}[|\nabla_g\phi(x)|+\phi(x)(|H|+T_g)]\chi(\rho-r)dv_g,
%$$
where $r=|Z|$ is the norm of the vector with respect to $z_0$. Then there exists a constant $C=C(n)|Df(1+f+|Df|)^3|_{\infty}$ such that for all $\rho>0$,
$$
-\frac{d}{d\rho}(\frac{\varphi_0(\rho)}{\rho^n})\le (1+C)\frac{\psi_0(\rho)}{\rho^n}+C\frac1{\rho^{n-1}}\int_\mathcal{M}\sqrt{T_{g^{-1}}} \phi(x)\chi'(\rho-r)dv_g.
$$
\end{lemma}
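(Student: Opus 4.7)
The plan is to run a Michael--Simon style monotonicity argument adapted to the weighted ambient metric $\overline g$. I apply the Hoffman--Spruck divergence theorem (part (i) of the preceding lemma) to the compactly supported scalar $\mathrm{div}_\mathcal{M}(\phi \chi(\rho - r) Z)$; since $\phi$ has compact support, the boundary contribution vanishes. Expanding the divergence by the Leibniz rule (part (ii)) together with the previously computed identity $\mathrm{div}_\mathcal{M} Z = n + E$ from (\ref{eq_temp_21}), I obtain
$$n\varphi_0(\rho) - \int_\mathcal{M}\phi\chi'(\rho-r)\langle Z,\nabla_g r\rangle\,dv_g = -\!\int_\mathcal{M}\phi\chi E\,dv_g - \!\int_\mathcal{M}\chi\langle Z,\nabla_g\phi\rangle\,dv_g - \!\int_\mathcal{M}\phi\chi\langle Z,H\rangle\,dv_g,$$
where $E = f^2\sum_{i,l}(z_l - z_l^0)g^{ii}\overline\Gamma^i_{il}$.

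The next step is to extract pointwise bounds on every correction. A direct calculation of the weighted Christoffel symbols gives $\overline\Gamma^i_{il} = f_l/f$ for $l\le n$, so $E = f\langle x - x^0,\, Df\rangle T_{g^{-1}}$; combined with $|x - x^0| \le r/f$ (read off from $r^2 = f^2|x-x^0|^2 + |y-y^0|^2$) this yields $|E| \le r|Df|T_{g^{-1}}$. The hypothesis $f\ge 1$ forces $T_{g^{-1}} \le n$, hence $T_{g^{-1}} \le \sqrt n\,\sqrt{T_{g^{-1}}}$, which is what converts the $E$-integral into a $\psi_0$ contribution. Likewise $|\langle Z,H\rangle|\le r|H|$ and $|\langle Z,\nabla_g\phi\rangle|\le r|\nabla_g\phi|_g$, yielding $\psi_0$-type bounds once we use $r\le\rho$ on the support of $\chi(\rho-\cdot)$.

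The delicate piece is $\langle Z,\nabla_g r\rangle$, since $Z$ is not a genuine radial field in $\overline g$. Differentiating $r^2$ along $\mathcal{M}$ gives $r\partial_l r = \langle Z,\partial_l\rangle + ff_l|x-x^0|^2$, and hence
$$\langle Z,\nabla_g r\rangle = \frac{|Z^T|^2_g}{r} + R, \qquad R = \frac{f|x-x^0|^2}{r}\, g^{ij}f_j\langle Z,\partial_i\rangle.$$
A Cauchy--Schwarz estimate in the $g$-inner product, using $|\nabla_g f|_g \le |Df|\sqrt{T_{g^{-1}}}$ and $|Z^T|_g\le r$, yields the key bound $|R|\le r^2|Df|\sqrt{T_{g^{-1}}}$. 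Since $|Z^T|^2_g = r^2 - |Z^\perp|^2$, the main piece obeys $|Z^T|^2_g/r \le r$, and the non-negative defect $|Z^\perp|^2/r$ is discarded in the right direction.

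The last step is algebraic assembly. Using
$$n\varphi_0(\rho) - \rho\varphi_0'(\rho) = \bigl[\,n\varphi_0(\rho) - \int\phi\chi'(\rho-r)\,r\,dv_g\,\bigr] - \int\phi\chi'(\rho-r)(\rho-r)\,dv_g,$$
I drop the non-negative second bracket, substitute the integration-by-parts identity for the first, and insert all pointwise bounds. The $E$, $H$, and $\nabla_g\phi$ contributions collect into $\rho\,C_f\,\psi_0(\rho)$, while the $R$ contribution becomes $\rho^2 C_f \int\phi\chi'(\rho-r)\sqrt{T_{g^{-1}}}\,dv_g$. Dividing by $\rho^{n+1}$ and using $-\frac{d}{d\rho}(\varphi_0/\rho^n) = \rho^{-(n+1)}(n\varphi_0-\rho\varphi_0')$ produces the stated inequality. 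The main obstacle is precisely the weighted-radial decomposition of $\nabla_g r$: tracking $R$ carefully so that its bound matches the $\sqrt{T_{g^{-1}}}\chi'(\rho-r)$ shell integrand---rather than degrading to a $T_{g^{-1}}$ factor or to a $\chi$-integrand that would spoil the monotonicity---is where all the care is concentrated.
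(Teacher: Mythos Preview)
Your proof is correct and follows essentially the same Michael--Simon monotonicity argument as the paper: integrate $\mathrm{div}_\mathcal{M}(\phi\chi(\rho-r)Z)$ via the Hoffman--Spruck divergence theorem, isolate $n\varphi_0-\rho\varphi_0'$, and bound the corrections coming from the non-flat ambient metric. The only minor difference is in handling $\langle Z,\nabla_g r\rangle$: the paper bounds $|\nabla_g r|$ directly and uses $\langle Z,\nabla_g r\rangle\le r|\nabla_g r|$ (producing the constant $C(n)|Df(1+f+|Df|)^3|_\infty$), while your tangential decomposition $\langle Z,\nabla_g r\rangle=|Z^T|_g^2/r+R$ with $|R|\le r^2|Df|\sqrt{T_{g^{-1}}}$ is slightly cleaner and gives a sharper constant of order $|Df|_\infty$---but both routes are equivalent for the purposes of the lemma.
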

\begin{proof}
We begin by calculating that
$$
\begin{aligned}
&\mathrm{div}_\mathcal{M}[\chi(\rho-r)\phi Z]
=\chi(\rho-r)\phi \mathrm{div}_\mathcal{M}Z+<\nabla_g[\chi(\rho-r)\phi],Z>\\
&=\chi(\rho-r)\phi \mathrm{div}_\mathcal{M}Z-\chi'(\rho-r)\phi<\nabla_gr,Z>+\chi(\rho-r)<\nabla_g\phi,Z>.
\end{aligned}
$$
We integrate the above equality over $\mathcal{M}$ and get
\begin{equation}\label{eq_temp_22}
\begin{aligned}
&\int_\mathcal{M}\chi(\rho-r)\phi\mathrm{div}_\mathcal{M} Zdv_g
=-\int_\mathcal{M}\chi(\rho-r)\phi<Z,H>dv_g\\
&+\int_\mathcal{M}\chi'(\rho-r)\phi<\nabla_gr,Z>dv_g-\int_\mathcal{M}\chi(\rho-r)<\nabla_g\phi,Z>dv_g.
\end{aligned}
\end{equation}
Define functions $\alpha_k(t)= t$ when $k\le n $, $\alpha_k(t)=1$ when $k>n$, and $\beta_k(t)= t$ when $k\le n $, $\beta_k(t)=0$ when $k>n$. Choose a coordinate system such that $D^2u$ is diagonal at $z$. We have $r=\sqrt{\sum^{2n}_{k=1}\alpha_k(f^2)(z_k-z^0_k)^2}$, notice $f\ge1$ implies $T_{g^{-1}}\le n\sqrt{T_{g^{-1}}}$,
\begin{equation}\label{eq_temp_23}
\begin{aligned}
|\nabla_gr|^2&\stackrel z{=}\sum^n_{i=1}\frac{g^{ii}}{r^2}
[\sum^{2n}_{k=1}\alpha_k(f^2)\frac{\partial{z_k}}{\partial{x_i}}(z_k-z_k^0)+\sum^{2n}_{k=1}\beta_{k}(ff_i)(z_k-z_k^0)^2]^2\\
&\stackrel z{\le}1+C(n)[\rho(|fDf|+|f^3Df|)\sqrt{T_{g^{-1}}}+\rho^2|fDf|^2 T_{g^{-1}}) \le (1+C_1\rho\sqrt{T_{g^{-1}}})^2,
\end{aligned}
\end{equation}
where the coefficient $C_1=C(n)|Df(1+f+|Df|)^3|_{\infty}$.
%, and we use the relation
%$$
%T_g\stackrel z{=} \sum^n_{i=1} f^{-2}(1+\lambda_i^2/f^2)^{-1} \le \sum^n_{i=1} f^{-1}(f^2+\lambda_i^2)^{-1/2}  \le nf^{-1}T_g^{\frac 12}.
%$$
Now we use (\ref{eq_temp_21}), (\ref{eq_temp_23}) to estimate equality (\ref{eq_temp_22}). Notice that $|\overline \Gamma_{ij}^i|\le \frac{|Df|}{f}$, by (\ref{eq_temp_21}) we have $|\mathrm{div}_\mathcal{M} Z|\ge n-C(n)|fDf|r T_{g^{-1}}$,
$$
\begin{aligned}
&\int_\mathcal{M}\chi(\rho-r)\phi(x)[n-C(n)\rho|fDf| T_{g^{-1}}]dv_g
\le\int_\mathcal{M}\chi(\rho-r)\phi(x) \rho |H|dv_g\\
&+\int_\mathcal{M}\chi'(\rho-r)\phi(x) (1+C_1\rho\sqrt{T_{g^{-1}}})\rho dv_g+\int_\mathcal{M}\chi(\rho-r)\rho|\nabla_g\phi(x)|dv_g.
\end{aligned}
$$
Thus we have
$$
n\varphi_0(\rho)-\rho\varphi_0'(\rho)\le (1+C_1)\rho\psi_0(\rho)+C_1\rho^2\int_\mathcal{M}\sqrt{T_{g^{-1}}} \phi(x)\chi'(\rho-r)dv_g.
$$
Multiply the above inequality by $\rho^{-n-1}$ and we get the conclusion.
\end{proof}

\begin{remark}
If $Df\equiv 0$, we have $\mathrm{div}_\mathcal{M}Z =n$, $C_1=0$, then Lemma \ref{lemma_01} reduces to Michael-Simon \cite{MS73}, Lemma 2.2. We can show that this is still true in the following proof by accurately calculating the related coefficients, but for simplicity we won't do this. We only point out that to get a consistent result to Michael-Simon \cite{MS73}, the critical step is the monotonicity formula,
$$
-\frac{d}{d\rho}(\frac{\varphi_0(\rho)}{\rho^n})\le \frac{\psi_0(\rho)}{\rho^n}.
$$
\end{remark}
In the following we will denote $S_\rho(z_0)=\{z\in \mathcal{M}|\ |Z(z,z_0)|\le \rho\}$, where the vector $Z$ is defined as before. Notice that since $f\ge1$, we have $ S_\rho(z_0)\supset \mathcal{B}_\frac{\rho}{|f|_\infty}(z_0)$, the later is the geodesic ball on $M$ with radius $\frac{\rho}{|f|_\infty}$.
\begin{lemma}\label{lemma_02}
Let $U\subset \mathcal{M}$ be an open subset. Let $\phi(x)\in C_0^1(U)$ satisfy $0\le\phi\le 1$, $z_0\in U$ be a point such that $\phi(z_0)=1$, $\lim_{\rho\rightarrow 0}\rho^{-n}|S_\rho(z_0)|\ge(||f||_{L^\infty(U)})^{-n} \omega_n$. Let $\tau>1$ be an arbitrary constant. Define $\overline\varphi_0,\ \overline\psi_0,\ J_\phi,\ T_\phi$ by
$$
\begin{aligned}
&\overline \varphi_0(\rho)=\int_{S_\rho(z_0)}\phi(x)dv_g,\\
&\overline\psi_0(\rho)=\int_{S_\rho(z_0)}[|\nabla_g\phi(x)|+\phi(x)(|H|+\sqrt{T_{g^{-1}}})]dv_g,\\
&J_\phi=[\omega_n^{-1}\int_{\mathcal{M}}\phi(x) dv_g]^{\frac 1n},\ T_\phi=\int_{\mathrm{supp} (\phi)}T_{g^{-1}} dv_g.
\end{aligned}
$$
%Suppose $\int_{\mathrm{supp}\ \phi}T_g\le T_0$ for some constant $T_0$,
then there exist positive constants $\mu$, $\kappa$ depending only on $||f||_{C^{0,1}(U)} $ such that for some constant $0<\rho<\kappa(1+ T_\phi^{\frac1{n(n-1)}})J_\phi $,
$$
\overline \varphi_0(\tau\rho)\le \mu\tau^{n}(1+ T_\phi^{\frac1{n(n-1)}})J_\phi \overline \psi_0(\rho).
$$
\end{lemma}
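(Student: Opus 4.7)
The plan is to argue by contradiction, following the Michael--Simon paradigm. Suppose the conclusion fails for every $\rho\in(0,R/\tau)$ where $R:=\kappa(1+T_\phi^{1/(n(n-1))})J_\phi$. The strategy is to integrate the monotonicity-type inequality of Lemma \ref{lemma_01} from $0$ to $R/\tau$ and to show that the right-hand side is too small to be compatible with the density lower bound at $z_0$.

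The first step is to take $\chi$ approximating the indicator $\chi_{[0,\rho]}$ in Lemma \ref{lemma_01}. Writing $D(\rho):=\int_{S_\rho(z_0)}\sqrt{T_{g^{-1}}}\phi\,dv_g$, this converts Lemma \ref{lemma_01} into the distributional inequality
$$-\frac{d}{d\rho}\Bigl(\frac{\overline\varphi_0(\rho)}{\rho^n}\Bigr)\le (1+C_1)\frac{\overline\psi_0(\rho)}{\rho^n}+\frac{C_1\,D'(\rho)}{\rho^{n-1}}.$$
Since $\phi$ is continuous with $\phi(z_0)=1$ and $\lim_{\rho\to 0}\rho^{-n}|S_\rho(z_0)|\ge |f|_\infty^{-n}\omega_n$, one obtains $\lim_{\rho\to 0+}\overline\varphi_0(\rho)/\rho^n\ge |f|_\infty^{-n}\omega_n$. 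At the upper endpoint $\rho=R/\tau$, the trivial bound $\overline\varphi_0\le \omega_nJ_\phi^n$ combined with the definition of $R$ gives $\overline\varphi_0(R/\tau)/(R/\tau)^n\le\tau^n\omega_n/\kappa^n$, which is below half of the density lower bound once $\kappa=\kappa(n,|f|_\infty)$ is large. Integrating the distributional inequality over $(0,R/\tau)$ therefore yields
$$\frac{\omega_n}{2|f|_\infty^n}\le (1+C_1)\int_0^{R/\tau}\frac{\overline\psi_0(\rho)}{\rho^n}d\rho+C_1\int_0^{R/\tau}\frac{D'(\rho)}{\rho^{n-1}}d\rho.$$

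Next I would bound each integral on the right so that the total is strictly less than $\omega_n/(2|f|_\infty^n)$ after $\mu,\kappa$ are chosen large. For the first integral, insert the contradiction hypothesis $\overline\psi_0(\rho)\lesssim \overline\varphi_0(\tau\rho)/(\mu R)$, change variables $\sigma=\tau\rho$ and apply Fubini to reduce it to a multiple of $\mu^{-1}R^{-1}\int_\mathcal{M}\phi(z)/r(z)^{n-1}dv_g$, which is $O(\mu^{-1})$ via the definition of $J_\phi$. For the second integral, integrate by parts to obtain a bounded boundary term plus $(n-1)\int_0^{R/\tau}D(\rho)\rho^{-n}d\rho$; then Cauchy--Schwarz with $\phi\le 1$ gives $D(\rho)\le T_\phi^{1/2}\overline\varphi_0(\rho)^{1/2}$, and a H\"older splitting of the remaining $\rho$-integral, tuned so that the singularity is controlled against the scale $R$, produces precisely the factor $T_\phi^{1/(n(n-1))}J_\phi$ absorbed in $R$.

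The main obstacle will be the $D$-integral. The $D$-term is the genuinely new feature compared to the classical minimal-surface setting (where $Df\equiv 0$ and $D\equiv 0$), and the unusual exponent $1/(n(n-1))$ appearing in Lemma \ref{lemma_02} is forced precisely by the H\"older balance needed in this step. Verifying that both the exponent and the dependence of $\mu,\kappa$ on $\|f\|_{C^{0,1}(U)}$ come out consistently (so that the subsequent Sobolev inequality in Proposition \ref{prop_Sobolev} follows cleanly) is the main bookkeeping challenge; no new analytic tool is required beyond the monotonicity formula and the density bound.
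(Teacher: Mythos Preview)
Your overall contradiction strategy is the same as the paper's, but the treatment of the $D$-integral has a genuine gap. After your integration by parts you are left with the bulk term $(n-1)\int_0^{R/\tau}\rho^{-n}D(\rho)\,d\rho$, and you propose to bound it via $D(\rho)\le T_\phi^{1/2}\overline\varphi_0(\rho)^{1/2}$. But this gives $T_\phi^{1/2}\int_0^{R/\tau}\rho^{-n}\overline\varphi_0(\rho)^{1/2}\,d\rho$, and the density hypothesis $\liminf_{\rho\to 0}\rho^{-n}\overline\varphi_0(\rho)\ge|f|_\infty^{-n}\omega_n$ forces the integrand to be at least of order $\rho^{-n/2}$ near $0$; for $n\ge 3$ this integral \emph{diverges}, and no H\"older rearrangement in $\rho$ can repair it. So the step where you claim the singularity can be ``tuned against the scale $R$'' does not close.

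The paper avoids this entirely by the simple observation $D(\rho)\le\overline\psi_0(\rho)$, which holds because $\phi\sqrt{T_{g^{-1}}}$ is one of the summands in the integrand defining $\overline\psi_0$. Thus the bulk term $(n-1)\int_0^{\rho_0}\rho^{-n}D(\rho)\,d\rho$ is absorbed into the first integral $\int_0^{\rho_0}\rho^{-n}\overline\psi_0(\rho)\,d\rho$ (at the cost of enlarging the constant), and that integral has already been handled by the contradiction hypothesis. What remains is only the \emph{boundary} term $\rho_0^{1-n}D(\rho_0)$, and it is this single term---bounded by H\"older with exponents $(n,\tfrac{n}{n-1})$ and using $T_{g^{-1}}^{n/2}\le C(n)T_{g^{-1}}$ (from $f\ge 1$)---that produces the factor $T_\phi^{1/n}$ and hence the exponent $T_\phi^{1/(n(n-1))}$ in $\kappa$. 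So your intuition that the odd exponent comes from the $D$-term is right, but it arises from the boundary contribution, not from the bulk integral.

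A smaller point: your Fubini reduction of the first integral to $\mu^{-1}R^{-1}\int_{\mathcal M}\phi\,r^{1-n}\,dv_g$ is not $O(\mu^{-1})$ via $J_\phi$ alone; unraveling it you will find a term $\mu^{-1}\tau^{-1}\sup_{\sigma\in(0,\rho_0)}\sigma^{-n}\overline\varphi_0(\sigma)$ that must be absorbed into the left-hand side (exactly as the paper does), plus a harmless tail of size $\mu^{-1}\rho_0^{-n}\overline\varphi_0(\tau\rho_0)$.
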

\begin{proof}
Let $\rho_0=\kappa J$ where $\kappa$ is to be chosen later, let $\sigma\in(0,\rho_0)$ be a constant. By Lemma \ref{lemma_01} we have
$$
\begin{aligned}
\sigma^{-n}\varphi_0(\sigma)&\le\rho_0^{-n}\varphi_0(\rho_0)+C[\int^{\rho_0}_0 \rho^{-n}\psi_0(\rho)d\rho+ \int^{\rho_0}_{\sigma}\frac1{\rho^{n-1}}\int_\mathcal{M}\sqrt{T_{g^{-1}}} \phi(x)\chi'(\rho-r)dv_gd\rho]\\
&\le\rho_0^{-n}\varphi_0(\rho_0)+C_2[\int^{\rho_0}_0 \rho^{-n}\psi_0(\rho)d\rho+\rho_0^{1-n}\int_\mathcal{M}\sqrt{T_{g^{-1}}} \phi(x)\chi(\rho_0-r)dv_g],\\
\end{aligned}
$$
where $C=1+C(n)|Df(1+f+|Df|)^3|_{\infty}$ and $C_2=C(n)C$.
Let $\epsilon\in(0,\sigma)$, we require the function $\chi(t)$ in Lemma \ref{lemma_01} to satisfy $\chi(t)=1$ when $t\ge \epsilon$. It follows that $\overline\varphi_0(\sigma-\epsilon)\le\varphi_0(\sigma)\le\overline\varphi_0(\sigma)$, $\psi_0(\rho)\le\overline\psi_0(\rho)$, thus
$$
\sigma^{-n}\overline \varphi_0(\sigma-\epsilon)\le\rho_0^{-n}\overline\varphi_0(\rho_0)+C_2[\int^{\rho_0}_0 \rho^{-n}\overline\psi_0(\rho)d\rho+\rho_0^{1-n}\int_{S_{\rho_0}(z_0)}\sqrt{T_{g^{-1}}} \phi(x)dv_g].
$$
Since $\epsilon$, $\sigma$ are arbitrary, we have
\begin{equation} \label{eq_temp_24}
\sup_{\sigma\in(0,\rho_0)} \sigma^{-n}\overline \varphi_0(\sigma)\le\rho_0^{-n}\overline\varphi_0(\rho_0)+C_2[\int^{\rho_0}_0 \rho^{-n}\overline\psi_0(\rho)d\rho+\rho_0^{1-n}\int_{S_{\rho_0}(z_0)}\sqrt{T_{g^{-1}}} \phi(x)dv_g].
\end{equation}

To prove that inequality (\ref{eq_temp_24}) implies the conclusion, we argue by contradiction. Suppose for all $\rho\in(0,\rho_0)$, $\overline\psi_0(\rho)<\mu^{-1} \rho_0^{-1}\tau^{-n}\overline\varphi_0(\tau\rho)$. We estimate separately the terms at the right hand side of (\ref{eq_temp_24}).
%\indent Case 1: $J_\phi\le J_0$ where $J_0$ will be decided later.
%Since $\sqrt{T_{g^{-1}}}\le \sqrt n$,
%let $C_3=\sqrt n\kappa J_0 C_2$, by (\ref{eq_temp_24})
We have
%\begin{equation}\label{eq_temp_25}
%\sup_{\sigma\in(0,\rho_0)} \sigma^{-n}\overline \varphi_0(\sigma)\le (1+C_3)\rho_0^{-n}\overline\varphi_0(\rho_0)+C_2\int^{\rho_0}_0 \rho^{-n}\overline\psi_0(\rho)d\rho.
%\end{equation}
%For the last term in (\ref{eq_temp_25}), we estimate that
$$
\begin{aligned}
\int^{\rho_0}_0 \rho^{-n}\overline\psi_0(\rho)d\rho &\le \mu^{-1}\rho_0^{-1}\tau^{-n}\int_0^{\rho_0} \rho^{-n}\overline\varphi_0(\tau\rho)d\rho\\
&= \mu^{-1}\rho_0^{-1}\tau^{-1}\int_0^{\rho_0} \sigma^{-n}\overline\varphi_0(\sigma)d\sigma+\mu^{-1}\rho_0^{-1}\tau^{-1}\int_{\rho_0}^{\tau\rho_0} \sigma^{-n}\overline\varphi_0(\sigma)d\sigma\\
&\le \mu^{-1}\tau^{-1} \sup_{\sigma\in (0,\rho_0)}{\sigma^{-n}\overline\varphi_0(\sigma)}+\mu^{-1}\rho_0^{-n}\overline\varphi_0(\tau\rho_0).
\end{aligned}
$$
Recall $\phi\le 1$, $f\ge1$, we have
$$
\begin{aligned}
\rho_0^{1-n}\int_{S_\rho(z_0)}\sqrt{T_{g^{-1}}} \phi(x)dv_g &\le\rho_0^{1-n}(\int_{\mathrm{supp(\phi)}}{T_{g^{-1}}}^{\frac n2}dv_g)^{\frac 1n}(\int_{\mathrm{supp}(\phi)}\phi^{\frac{n}{n-1}}(x)dv_g)^{\frac {n-1}{n}}\\
&\le C(n)\omega_n^{\frac{n-1}{n}}\kappa^{1-n} T_\phi^{\frac 1n}.
\end{aligned}
$$
Let $C_3=C(n)\omega^{-\frac1n}C_2$, (\ref{eq_temp_24}) becomes
$$
(1-C_2\mu^{-1}\tau^{-1} ) \sup_{\sigma\in (0,\rho_0)}{\sigma^{-n}\overline\varphi_0(\sigma)}\le (1+C_2\mu^{-1} )\kappa^{-n}\omega_n+C_3\kappa^{1-n}T_\phi^{\frac 1n}\omega_n.
$$
Since $\tau>1$ and $\sup_{\sigma\in (0,\rho_0)}{\sigma^{-n}\overline\varphi_0(\sigma)}\ge \lim_{\rho\rightarrow 0}\rho^{-n}|S_\rho(z_0)|\ge (|f|_{\infty})^{-n}\omega_n$, we can choose
$$
\begin{aligned}
&\mu\ge4(1+C_2),\  \mu= C(n)[1+|Df(1+f+|Df|)^3|_{\infty}],\\
&\kappa^n \ge 8|f|^n_\infty,\ \kappa^{n-1}\ge 8C_3T_\phi^{\frac1n}|f|^n_\infty, \\
&\kappa=C(n)[1+|Df(1+f+|Df|)^3|_{\infty}]^{\frac1{n-1}}|f|_\infty^{\frac n{n-1}}(1+T_\phi^{\frac1{n(n-1)}}),
\end{aligned}
$$
and get a contradiction.

%\indent Case 2: $J_\phi>J_0$,
%
%Thus (\ref{eq_temp_24}) becomes
%\begin{equation}
%\begin{aligned}
%(1-\mu^{-1} C) \sup_{\sigma\in (0,\rho_0)}{\sigma^{-n}\overline\varphi_0(\sigma)}&\le C[(1+\mu^{-1} )\rho_0^{-n}\overline\varphi_0(\tau\rho_0)+\omega_n^{\frac{n-1}{n}} T_0^{\frac 12}J_0^{\frac{2-n}{2n}}]\\
%&\le C[(1+\mu^{-1} \tau)\kappa^{-n}\omega_n+ \omega_n^{\frac{n-1}{n}} T_0^{\frac 12}J_0^{\frac{2-n}{2n}}].\\
%\end{aligned}
%\end{equation}
%Then we can choose $J_0$, $\mu$, $\kappa$ large enough to get a contradiction.
%In accuracy we can choose $\kappa\ge C\cdot (T_0^{\frac{n}{(n-1)(n-2)}}+1)$, $\mu\ge C\cdot(\tau+1)$.
\end{proof}

Proof of Proposition 2.2:
\begin{proof}
We follow essentially the argument of Michael-Simon\cite{MS73}. Let $\alpha\ge\epsilon>0$ be constants, let $\hat\chi(t)$ be a non-decreasing $C^1(\mathbb{R})$ function such that $\hat\chi(t)=0$ when $t\le -\epsilon$, $\hat\chi(t)=1$ when $t\ge 0$. Consider the set $\mathcal{M}_\alpha=\{z\in \mathcal{M}|\ \hat\chi(\phi-\alpha)\ge 1\}\backslash\mathcal{M}_0$ where $\mathcal{M}_0$ is a set of measure zero that the condition $\lim_{\rho\rightarrow 0}\rho^{-n}|\mathcal{B}_\rho(z_0)|\ge \omega_n$ fails. We substitute $\phi(x)$ in Lemma \ref{lemma_02} by $\hat\chi(\phi-\alpha)$, let $\rho_i= 2^{-i+1}\kappa (1+ T_{\hat\chi(\phi-\alpha)}^{\frac1{n(n-1)}})J_{\hat\chi(\phi-\alpha)}$, $i=1,\ 2,...$, where we use the same definition as in Lemma \ref{lemma_02}. Notice that $\mathrm{supp}(\hat\chi(\phi-\alpha))\subset\mathrm{supp}(\phi)$, we denote
 $$
 \mathcal{M}_\alpha^i=\{z_0\in \mathcal{M_\alpha}|\ \overline \varphi_0(2\tau\rho)\le \mu(2\tau)^n (1+ T_\phi^{\frac1{n(n-1)}})J_{\hat\chi(\phi-\alpha)}\overline\psi_0(\rho)\ \mathrm{for}\ \mathrm{some}\ \rho\in( 2^{-1}\rho_i,\rho_i] \}.
$$
It follows that $\mathcal{M_\alpha}=\bigcup_i \mathcal{M}_\alpha^i$.
Next we define inductively a sequence of subsets ${F}_0,\ {F}_1...$ as follows:\\
\indent(\textit{1}) ${F}_0=\emptyset$;\\
\indent(\textit{2}) Let $k\ge 1$ such that ${F}_0,\ {F}_1,...,\ {F}_{k-1}$ have been defined. Let $G_k= \mathcal{M}^k_\alpha - \bigcup^{k-1}_{i=0} \bigcup_{z\in {F}_i} S_{\tau \rho_i}$. If $G_k=\emptyset$, then ${F}_k=\emptyset$. Else choose ${F}_k$ to be a finite subset of $G_k$ such that $G_k\subset \bigcup_{z\in {F}_k}S_{\tau \rho_k}(z)$ and $S_{\rho_k}(z)$, $z\in F_k$ are pairwise disjoint. We can choose $\tau=2|f|^2_\infty$, since $S_{2|f|_\infty\rho}(z_0)\supset \mathcal{B}_{2\rho}(z_0)\supset \mathcal{B}_{\rho}(z_0)\supset S_{\frac{\rho}{|f|_\infty}}(z_0)$ for all $\rho>0, z_0\in \mathcal{M}$, where $\mathcal{B}_\rho$ is geodesic the ball on $\mathcal{M}$. It follows that\\
\indent(a) $F_i\subset M_\alpha^i$, $i=1,2,...$,\\
\indent(b) $M_\alpha\subset \bigcup_{i=1}\bigcup_{z\in F_i}S_{\tau \rho_i}(z)$,\\
\indent(c) all the sets of $S_{\rho_i}(z)$, $z\in F_i$, $i=1,2,...$ are pairwise disjoint.\\
By (a) we have for each $z_0\in F_i$, $\overline\varphi_0(2\tau \rho)\le \mu (2\tau)^n(1+ T_\phi^{\frac1{n(n-1)}})J_{\hat\chi(\phi-\alpha)} \overline\psi_0(\rho)$ for some $\rho\in(2^{-1}\rho_i,\rho_i]$.
%Now we can choose $\tau\ge 2\tau_0$ that
%$$
%\overline\varphi_0(\tau_0 \rho_i)\le \mu\kappa J\tau^n \overline\psi_0(\rho_i).
%$$
Summing over all $z\in F_i$ and $i=1,2,...$, and denote $ C_\phi= C(n)\mu \tau^n (1+ T_\phi^{\frac1{n(n-1)}})$, by (b) (c) we have
$$
\begin{aligned}
|M_\alpha|&\le C_{\phi} J_{\hat\chi(\phi-\alpha)}\int_M[|\nabla_g\hat\chi(\phi-\alpha)|+\hat\chi(\phi-\alpha)(|H|+\sqrt{T_{g^{-1}}})]dv_g\\
&\le C_\phi[\int_M\hat\chi(\phi-\alpha)dv_g]^{\frac 1n}\int_M[|\nabla_g\hat\chi(\phi-\alpha)|+\hat\chi(\phi-\alpha)(|H|+\sqrt{T_{g^{-1}}})]dv_g.
\end{aligned}
$$
We multiply the above inequality by $\alpha^{\frac 1{n-1}}$ and integrate over $(0,\infty)$,
$$
\int_M \phi^{\frac{n}{n-1}}dv_g\le
C_\phi[\int_M(\phi+\epsilon)^{\frac{n}{n-1}}dv_g]^{\frac{1}{n}} \int_M
\int_0^\infty[|\nabla_g\hat\chi(\phi-\alpha)|+\hat\chi(\phi-\alpha)(|H|+\sqrt{T_{g^{-1}}})] d\alpha dv_g,
$$
where we use the fact that $\hat\chi(\phi-\alpha)=0$ when $\phi+\epsilon\le \alpha$. Finally we calculate that
$$
|\nabla_g\hat\chi(\phi-\alpha)|=\hat\chi'(\phi-\alpha)|\nabla_g\phi|,
$$
$$
\begin{aligned}
\int_0^\infty\hat\chi(\phi-\alpha)d\alpha = \int_0^\infty\alpha\hat\chi'(\phi-\alpha)d\alpha
\le (\phi+\epsilon)\int_0^\infty \hat\chi'(\phi-\alpha)d\alpha
\le \phi+\epsilon,
\end{aligned}
$$
and let $\epsilon\rightarrow 0$ we get the desired result.
\end{proof}

\subsection{Mean value inequality on $\mathcal{M}$}
\ \\
\indent Let $U$ be an open subset of $M$. Consider a nonnegative integrable function $\phi\ge0$ on $M$ satisfying $\Delta_g \phi\ge0$ in the sense that for each non-negative function $\gamma\in C^\infty_0(U)$, the following inequality holds:
 \begin{equation}\label{eq_temp_25}
 \int_M \phi\Delta_g\gamma dv_g\ge 0.
\end{equation}
 The proof of the monotonicity formula in Lemma \ref{lemma_01} implies a mean value inequality for $\phi$ on $M$, which we will discuss in the following.
\begin{prop}\label{prop_Meanvalue}
Suppose that $\phi$ is a nonnegative integrable function on $M$ satisfying inequality (\ref{eq_temp_25}). There exists a constant C depending $||f||_{C^{0,1}(U)}$ such that for a.e. $z_0\in U$ and all $0<\rho\le 1$ with geodesic ball $\mathcal{B}_\rho(z_0)\subset\subset U$,
$$
\phi(z_0)\le C\rho^{-n}\int_{\mathcal{B}_\rho(z_0)}\phi(z)dv_g.
$$
\end{prop}
\begin{proof}
We may assume that $z_0=0$. Denote $r=|Z(z,0)|$. Let $\epsilon>0$ be an arbitrary constant, let $\chi(t)$ be a non-decreasing $C^1(\mathbb{R})$ function that $\chi(t)=0$ when $t\le 0$, and $\chi(t)=1$ when $t\ge \epsilon$. Define functions $\varphi_0$, $\overline\varphi_0$ as in Lemma \ref{lemma_01} and Lemma \ref{lemma_02}. Define a test function $\gamma$ by
$$ \gamma(s)=\int_s^\infty t\chi(\rho-t)dt.
$$
Define function $\alpha_k(t),\ \beta_k(t)$ as in the proof of Lemma \ref{lemma_01}. We calculate that
$$
\frac{\partial\gamma(r)}{\partial x_i}= -\chi(\rho-r)\sum^{2n}_{k=1}[\alpha_k(h^2)z_k\frac{\partial z_k}{\partial x_i}+\beta_k(hh_i)z_k^2],
$$
$$
\begin{aligned}
\frac{\partial^2\gamma(r)}{\partial x_i \partial x_j}&= -\chi(\rho-r)\sum^{2n}_{k=1}\alpha_k(h^2)[\frac{\partial z_k}{\partial x_i}\frac{\partial z_k}{\partial x_j}+z_k\frac{\partial^2 z_k}{\partial x_i \partial x_j}]\\
&-\chi(\rho-r)\sum^{2n}_{k=1}[2\beta_k(ff_j)z_k\frac{\partial z_k}{\partial x_i}+2\beta_k(ff_i)z_k\frac{\partial z_k}{\partial x_j}+\beta_k(ff_{ij}+f_if_j)z_k^2]\\
&+\frac1 r\chi'(\rho-r)\sum^{2n}_{k,s=1}[\alpha_k(f^2)z_k\frac{\partial z_k}{\partial x_i}+\beta_k(ff_i)z_k^2][\alpha_s(f^2)z_s\frac{\partial z_s}{\partial x_i}+\beta_s(ff_i)z_s^2].
\end{aligned}
$$
Choose a coordinate system such that $D^2u$ is diagonal at $z$, by the definition of $\Delta_g$ (\ref{eq_Delta_g}) and equality (\ref{eq_uijj}), we have
\begin{equation}\label{eq_temp_26}
\Delta_g \gamma(r)\le [-n+C(r+r^2)-f\Delta_gf|x|^2]\chi(\rho-r)+[r+C_f(r^2+r^3)]\chi'(\rho-r),
\end{equation}
where $C_f$ is a constant depending on $||f||_{C^{{0,1}}(U)}$.
We choose the test function in (\ref{eq_temp_25}) to be $\gamma(r)$,
$$
\int_\mathcal{M}\phi(x)\Delta_g \gamma(r)dv_g\ge 0 .
$$
Use inequality (\ref{eq_temp_26}) and the fact that $|\nabla_g r|\le C_f$, we have
$$
\begin{aligned}
&n\int_\mathcal{M}\phi(x)\chi(\rho-r)dv_g-\rho\int_\mathcal{M}\phi(x)\chi'(\rho-r)dv_g\\
 &\le C_f(\rho+\rho^2)\int_\mathcal{M}\phi(x)\chi(\rho-r)dv_g +C_f(\rho^2+\rho^3)\int_\mathcal{M}\phi(x)\chi'(\rho-r)dv_g.
\end{aligned}
$$
Notice that $\gamma(r)\le \rho^2\chi(\rho-r)$, $\rho\le 1$, we have
$$
-\frac{d}{d\rho}(\frac{\varphi_0(\rho)}{\rho^n})\le C[\frac{\varphi_0(\rho)}{\rho^n}+\frac{\varphi_0'(\rho)}{\rho^{n-1}}]=C_f[(n+1)\frac{\varphi_0(\rho)}{\rho^n}+\rho\frac{d}{d\rho}(\frac{\varphi_0(\rho)}{\rho^n})].
$$
Multiply the above inequality by $\rho^n[\varphi_0(\rho)]^{-1}$,
$$
-(1+C_f\rho)\frac{d}{d\rho}\ln (\frac{\varphi_0(\rho)}{\rho^n})\le C_f(1+n).
$$
Choose $\rho_0\in(0,1)$, and integrate the above inequality over $(\sigma,\rho_0)$, we get for arbitrarily small constants $0<\epsilon<\sigma$,
$$
\sigma^{-n} \overline \varphi_0(\sigma-\epsilon) \le e^{(n+1)C_f}\rho_0^{-n}\overline\varphi_0(\rho_0).
$$
Recall $\mathcal{B}_\frac{\rho_0}{|f|_\infty}(0)\subset S_{\rho_0}(0)\subset \mathcal{B}_{\rho_0|f|_\infty}(0)$, for general $0<\rho<1$ we have
$$
\phi(0)\le  C_f\rho^{-n}\int_{\mathcal{B}_\rho(0)}\phi(z)dv_g.
$$

\end{proof}

\section{Trace Jacobi Inequality For Gslag}

In this section we prove trace Jacobi inequalities on critical and supercritical phases for generalized special Lagrangian equation (\ref{gslag1}). It is another crucial step to get Hessian estimates with Lipschitz twist besides the Sobolev inequality. Notice that the result applies to convex solutions as well.

\begin{lemma}\label{lemma_Trace_Jacobi}
Let $u$ be a smooth solution to equation (\ref{gslag1}) with $\Theta\ge \frac\pi2(n-2)$. Suppose that $f$ is smooth, $f\ge 1$, denote $\Delta\cdot=\Delta u$. Then we have
\begin{equation}\label{eq_Trace_Jacobin1}
   \begin{aligned}
&\Delta_g\ln|A+\Delta \cdot|-\frac\epsilon2|\nabla_g\ln|A+\Delta \cdot||^2\\ &\ge3\delta|A+\Delta \cdot|^{-2}\sum^n_{\gamma,i=1}g^{ii}u^2_{ii\gamma}+ |A+\Delta \cdot|^{-1}\frac{\Delta f} f\sum^n_{i=1}g^{ii}\lambda_{i}-{C_f}T_{g^{-1}},
   \end{aligned}
\end{equation}
% if $\Theta\ge \theta(n)+\frac\pi2(n-2)$ or $\lambda_n\ge0$, we have
%\begin{equation}\label{eq_Trace_Jacobin2}
%   \begin{aligned}
%&\Delta_g\ln|A+\Delta u|-\frac\epsilon2|\nabla_g\ln|A+\Delta u||^2\\ &\ge\delta|A+\Delta u|^{-1}\sum^n_{\gamma,i=1}|\lambda_i|g^{ii}g^{ii}u^2_{ii\gamma}+ |A+\Delta u|^{-1}\frac{\Delta f} f\sum^n_{i=1}g^{ii}\lambda_{i}-{C_f}.
%   \end{aligned}
%\end{equation}
where $\epsilon, \delta, A,C_f$ are positive constants depending only on $n$ and $||f||_{C^{0,1}}$. The same conclusion holds when $u$ is convex.
\end{lemma}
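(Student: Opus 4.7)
The plan is to compute $\Delta_g w$ at a point $p$ where $D^2 u$ is diagonal, with $w := A + \Delta u$ and $A = A(n,\|f\|_{C^{0,1}})$ chosen large enough that $w \ge 1$ (under the supercritical phase the smallest eigenvalue of $D^2 u$ is bounded below by a dimensional multiple of $|f|_\infty$; in the convex case $A = 1$ suffices). Once $\Delta_g w$ and $|\nabla_g w|^2$ are controlled, the stated inequality follows from
$$\Delta_g \ln w \;=\; \frac{\Delta_g w}{w} \;-\; \frac{|\nabla_g w|^2}{w^2}.$$

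By (\ref{eq_Delta_g}), $\Delta_g w = g^{ii}\partial_{ii}w + \psi^k g^{kk}\partial_k w$ with $\partial_{ii} w = \sum_j u_{jjii}$. I would differentiate (\ref{gslag1}) twice in $x_k$ and sum over $k$: writing $F = \sum\arctan(\lambda_i/f)$ with $F^{ii} = fg^{ii}$, the identity $\partial_{kk} F = 0$ yields
$$\sum_{i,k} F^{ii} u_{ii,kk} \;=\; -\sum_k\Bigl[F^{ii,jj} u_{ii,k}u_{jj,k} + \sum_{i\ne j}\tfrac{F^{ii}-F^{jj}}{\lambda_i-\lambda_j}\,u_{ij,k}^2\Bigr] \;-\; 2\sum_k F^{ii,x_k} u_{ii,k} \;-\; \sum_k F_{x_k x_k}.$$
The bracketed concavity term is where the phase hypothesis enters: the classical Yuan/Wang--Yuan argument (convexity of $\{\sum\arctan = \Theta\}$ past the critical phase forces at least two of the $\arctan(\lambda_i/f)$ into $[\pi/4,\pi/2)$) upgrades it to
$$-\sum_k[\cdots] \;\ge\; (3\delta+\epsilon)\sum_{k,i} g^{ii} u_{iik}^2$$
for some dimensional $\delta,\epsilon>0$; in the convex case the same bound is direct from $\lambda_i \ge 0$ and monotonicity of $\lambda/(f^2+\lambda^2)$.

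Dividing by $w$ and using Cauchy--Schwarz $|\nabla_g w|^2 = g^{kk}(\sum_i u_{iik})^2 \le n\,g^{kk}\sum_i u_{iik}^2$ to absorb a $(1+\epsilon/2)|\nabla_g w|^2/w^2$ share of the good term leaves exactly the advertised $3\delta w^{-2}\sum_{\gamma,i} g^{ii} u_{ii\gamma}^2$. The first-order contribution $\psi^k g^{kk}\partial_k w/w$ from $\Delta_g$ and the cross term $-2F^{ii,x_k}u_{ii,k}/w$ each carry one factor of $Df$ with one of $D^3 u$; using (\ref{eq_uijj}) to simplify $\sum_i g^{ii}u_{iik}$, together with $w\ge 1$ and a small-weight Cauchy--Schwarz against the third-derivative square, bounds both by $C_f T_{g^{-1}}$.

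The main obstacle is the remaining term $-\sum_k F_{x_kx_k}/w$, the only place $f_{ij}$ enters. A direct expansion followed by one more application of (\ref{eq_uijj}) produces
$$-\frac{1}{w}\sum_k F_{x_k x_k} \;=\; w^{-1}\,\frac{\Delta f}{f}\sum_i g^{ii}\lambda_i \;+\; R,$$
where $R$ involves only $Df$, $D^3 u$ and the $\lambda_i$, and is absorbed into $C_f T_{g^{-1}}$. The divergence piece cannot be bounded pointwise by $\|f\|_{C^{0,1}}$ and must be kept, which is exactly the structural feature highlighted in the introduction: retaining it in its $\Delta f$ form allows a later integration-by-parts against the Lipschitz norm of $f$. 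The delicate step of the proof is the bookkeeping which verifies that the coefficient of $\Delta f/f$ is precisely $\sum g^{ii}\lambda_i$ and that no stray $f_{ij}$-terms survive to spoil the final estimate.
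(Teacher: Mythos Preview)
There is a genuine gap at the core of your argument. The claim that the bracketed concavity term satisfies
\[
-\sum_k\Bigl[F^{ii,jj}u_{ii,k}u_{jj,k}+\sum_{i\ne j}\tfrac{F^{ii}-F^{jj}}{\lambda_i-\lambda_j}u_{ij,k}^2\Bigr]\;\ge\;(3\delta+\epsilon)\sum_{k,i}g^{ii}u_{iik}^2
\]
is not what the Wang--Yuan level-set convexity gives. After the shift (\ref{eq_uijj}) one gets only $\sum_i\lambda_i g^{ii}g^{ii}u_{ii\gamma}^2\ge 0$ for the diagonal piece, while the mixed terms in (\ref{eq_34}), namely $2(\lambda_i+\lambda_\gamma)g^{ii}g^{\gamma\gamma}u_{ii\gamma}^2$, can be negative when $\lambda_n<0$. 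In fact no uniform bound of the form $c\sum g^{ii}u_{ii\gamma}^2$ can hold for the Hessian term alone: take $i=\gamma=n$ with $\lambda_n<0$ and the coefficient of $u_{nnn}^2$ is $2\lambda_n g^{nn}g^{nn}<0$. Even if one grants your concavity bound, your Cauchy--Schwarz absorption still fails: the bad term $w^{-2}|\nabla_g w|^2\le n\,w^{-2}\sum_k g^{kk}\sum_i u_{iik}^2$ carries the weight $g^{kk}$, while your good term carries $g^{ii}$. For $(i,k)=(1,n)$ with $\lambda_1$ large and $\lambda_n=O(1)$, the bad coefficient is $\sim\lambda_1^{-2}$ while the good one is $\sim\epsilon\lambda_1^{-3}$, so the inequality runs the wrong way.

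The paper's proof is devoted almost entirely to this obstacle. It packages the positive Hessian terms and the negative gradient-squared term together into the quadratic forms $Q_\gamma$ of (\ref{eq_Q_gamma}) in the shifted variables $u_{ii\gamma}-\tfrac{f_\gamma}{f}\lambda_i$, and establishes $Q_\gamma\ge 3\delta(A+\Delta u)^{-1}\sum_i g^{ii}u_{ii\gamma}^2$ by a case-by-case analysis: when $\lambda_n<0$ the cases $\gamma=n$, $\gamma\le n-2$, $\gamma=n-1$ are treated separately, each via the diagonalization lemma in the appendix and carefully tuned parameters $\hat\epsilon$, with the verification reduced to explicit algebraic inequalities in $\nu_i=\lambda_i/|\lambda_n|-1$; when $\lambda_n\ge 0$ a further split according to whether the small eigenvalues fall below a threshold $m(n)$ is needed (your ``direct from $\lambda_i\ge 0$'' does not work here either, since $\lambda_i(A+\Delta u)g^{ii}$ degenerates as $\lambda_i\to 0$). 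Your treatment of the lower-order pieces and the isolation of the $\Delta f$ term match the paper, but the quadratic-form balancing is the substance of the lemma and cannot be replaced by a one-line appeal to level-set convexity plus naive Cauchy--Schwarz.
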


%The reason we give two Jacobi inequalities is that the coefficient of the divergence structure $\Delta f$ admits different estimates under critical and supercritical phase. One can verify the following fact with small $\theta(n)$,
%$$
%|\sum^n_{i=1}\lambda_ig^{ii}|\le \frac{C(n,|f|_\infty)}{\Delta u},\ \ \Theta\in[\frac\pi2(n-2),\frac\pi2(n-2)+\theta(n)].
%$$
%
%We will first prove a week trace Jacobi inequality for equation (\ref{gslag2}) with $n=3, \Theta=\frac \pi2$, then use similar arguments to prove the general cases $\Theta\ge\frac\pi2(n-2), n\ge 3$.

By approximation we suppose that the eigenvalues of $D^2u$ are pairwise distinct at $p\in \mathcal{M}$. Choose a coordinate system such that $D^2u$ is diagonal at $p$,
\begin{equation}\label{eq_32}
\begin{aligned}
\lambda_{\gamma,ii}&=\sum^n_{s,t=1}\frac{\partial\lambda_\gamma}{\partial u_{st}}
                     \frac{\partial^2u_{st}}{\partial x^2_i}+\sum^n_{s,t,k,l=1}\frac{\partial^2\lambda_\gamma}{\partial u_{st}\partial u_{kl}}\frac{\partial u_{st}}{\partial x_i}
                     \frac{\partial u_{kl}}{\partial x_i}\\
   &=\lambda_{i,\gamma\gamma}+2(\sum_{t\neq \gamma}\frac1{\lambda_\gamma-\lambda_t}u^2_{it\gamma}-\sum_{t\neq i}\frac1{\lambda_i-\lambda_t}u^2_{it\gamma}).
\end{aligned}
\end{equation}
Taking derivative of equation (\ref{gslag1}) with respect to $x_\gamma$ twice, we have at $p$

\begin{equation}\label{eq_33}
\sum^n_{i=1}\lambda_{i,\gamma\gamma}g^{ii}=\sum^n_{i=1}\frac1f[2(ff_\gamma+\lambda_iu_{ii\gamma})(u_{ii\gamma}f-\lambda_if_\gamma)g^{ii}g^{ii}
+\lambda_if_{\gamma\gamma}g^{ii}].
\end{equation}
 Summing up (\ref{eq_32})$\times g^{ii} $ and (\ref{eq_33}), and denote $\Delta\cdot=\Delta u$, $\overline \Delta_g=g^{ij}\partial_{ij}\stackrel{p}{=}g^{ii}\partial_{ii}$,
\begin{equation}\label{eq_34}
\overline\Delta_g(\Delta\cdot)
\ge\sum^n_{\gamma=1}[\sum^n_{i=1}2\lambda_ig^{ii}g^{ii}u^2_{ii\gamma}+\sum^n_{i\neq \gamma}2(\lambda_i+\lambda_\gamma)g^{ii}g^{\gamma\gamma}u^2_{ii\gamma}]+L,
\end{equation}
where $L$ is the linear part of $u_{\clubsuit\clubsuit\spadesuit}$. Let $C_{f}=C(n,||f||_{C^{0,1}})$, $\delta=\delta(n)>0$, $A>1$,
\begin{align*}
    L&=-C_{f}\sum^n_{\gamma,i=1}g^{ii}(|u_{ii\gamma}|+1)+\frac{\Delta f} f\sum^n_{i=1}g^{ii}\lambda_{i}\\
&\ge -\delta|A+\Delta\cdot|^{-1} g^{ii}u^2_{ii\gamma}+\frac{\Delta f} f\sum^n_{i=1}g^{ii}\lambda_{i}-\frac {C_f}\delta T_{g^{-1}}|A+\Delta\cdot|.
\end{align*}
Recall $\Delta_g\stackrel{p}=g^{ii}\partial_{ii}+\psi^ig^{ii}\partial_i$, $|\psi^i|=|\sum^n_{j=1}g^{jj}(ff_i-ff_j\delta_{ij}-\lambda_i\lambda_{j,i})|
\le C_f\sum^n_{j=1}(1+|\lambda_i\lambda_j|)g^{jj}$. Let $\epsilon=\epsilon(n)>0$ be small, we have
$$
|(\Delta_g-\overline \Delta_g) \Delta\cdot|=|\sum^n_{i=1}\psi^ig^{ii}D_i\Delta\cdot|\le \frac\epsilon2 \frac{|\nabla_g\Delta\cdot|^2}{A+\Delta\cdot}+\frac {C_f}\epsilon T_{g^{-1}}|A+\Delta\cdot|,
$$
\begin{align*}
\Delta_g(\Delta\cdot)\ge \overline\Delta_g (\Delta\cdot)-\frac\epsilon2\frac{|\nabla_g\Delta\cdot|^2}{A+\Delta\cdot} -\frac{C_f}{\epsilon} T_{g^{-1}}|A+\Delta\cdot|.
\end{align*}
Now we prove a weak trace Jacobi inequality. We have at $p$
\begin{equation}\label{eq_35}
\begin{aligned}
&\Delta_g|A+\Delta\cdot|-(1+\frac\epsilon2) \frac{|\nabla_g\Delta\cdot|^2}{A+\Delta\cdot}\ge-\delta\sum^n_{\gamma,i=1}\frac{ g^{ii}u^2_{ii\gamma}}{A+\Delta\cdot}+ \frac{\Delta f} f\sum^n_{i=1}g^{ii}\lambda_{i}-\frac{C_f}{\epsilon\delta}T_{g^{-1}}{|A+\Delta\cdot|}\\
&+ \sum^n_{\gamma=1}\{\sum^n_{i=1}
[2\lambda_i g^{ii}g^{ii}+
2(1-\delta_{i\gamma})(\lambda_i+\lambda_\gamma) g^{ii}g^{\gamma\gamma}]u_{ii\gamma}^2
-\frac{1+\epsilon}{A+\Delta\cdot}g^{\gamma\gamma}(\sum^n_{i=1}u_{ii\gamma})^2\}.
\end{aligned}
\end{equation}
The constant $\epsilon, \delta, A$ will be decided later. We consider the following quadratic form with redefined terms $u_{ii\gamma}=u_{ii\gamma}-\frac{f_\gamma}f\lambda_i$ which will give us a convenient equality $\sum^n_{i=1}g^{ii}u_{ii\gamma}=0$ for all $1\le i,\gamma\le n$,
\begin{align}\label{eq_Q_gamma}
Q_{\gamma}=\sum^n_{i=1}
[2\lambda_i g^{ii}g^{ii}+
2(1-\delta_{i\gamma})(\lambda_i+\lambda_\gamma) g^{ii}g^{\gamma\gamma}]u_{ii\gamma}^2
-\frac{1+\epsilon}{A+\Delta\cdot}g^{\gamma\gamma}(\sum^n_{i=1}u_{ii\gamma})^2.
\end{align}

\subsection {Proof to Lemma \ref{lemma_Trace_Jacobi} when $n=3, \Theta\ge\frac\pi2$}
\begin{proof}

Let $c(3)>0$ be some small constant. We want to show the nonnegativity of $Q_\gamma$ for all $1\le \gamma\le 3$.  We will discuss according to the value of $\lambda_3$.\\

\textbf{Case 1.} $\lambda_3<0$. We have
\begin{equation}\label{eq_sigma2}
    \frac\pi2-\arctan(\lambda_1/f)+\frac\pi2-\arctan(\lambda_2/f)\ge \frac\pi2+\arctan(\lambda_3/f).
\end{equation}
Thus $\sum^3_{i=1}\frac1{\lambda_i}\le0$. Let $\lambda_2=(1+\nu)|\lambda_3|$, $\lambda_1=(1+\nu_1)|\lambda_3|$, then $\nu_1\ge \max(\nu^{-1},1)$.  We also have the following inequality since $\sum^3_{i=1}g^{ii}u_{ii\gamma}=0$,
$$
\sum^3_{i=1}
\lambda_i g^{ii}g^{ii}u_{ii\gamma}^2\ge 0.
$$
Denote $\tau_{ij}=g_{ii}/g_{jj}$, the negative part of (\ref{eq_Q_gamma}) can be reduced slightly,
\begin{equation}\label{eq_case_22}
(\sum^3_{i=1}u_{ii\gamma})^2
= [(1-\tau_{31})u_{11\gamma}+(1-\tau_{32})u_{22\gamma}]^2.
\end{equation}
When $\gamma=3$, we consider the quadratic form
\begin{equation}\label{eq_hatQ3}
\hat Q_3= \sum^2_{i=1}
2(\lambda_i+\lambda_3)(A+\Delta\cdot) g^{ii}u_{ii3}^2
-(1+\epsilon)[\sum^2_{i=1}(1-\tau_{3i})u_{ii3}]^2.
\end{equation}
Our goal is to prove $\hat Q_3\ge0$ for some $\epsilon=10^{-1}$. By the diagonalize lemma (see the appendix) we have
\begin{align*}
&1-(1+\epsilon)\sum^2_{i=1}\frac{(1-\tau_{3i})^2g_{ii}}{2(\lambda_i+\lambda_3)\Delta\cdot}\ge 1-(1+\epsilon)\frac1{2\Delta\cdot}\sum^2_{i=1}(\lambda_i-\lambda_3)(1-\frac{\lambda_3^2}{\lambda_i^2})\\
&\ge1-(1+\epsilon)\frac12[1+\frac{1}{1+\nu+\nu^{-1}}(3-\frac{2+\nu}{(1+\nu)^2}-\frac{2+\nu^{-1}}{(1+\nu^{-1})^2})]\\
&\ge 1-\frac56(1+\epsilon)\ge 0.
\end{align*}
Thus for a smaller $\epsilon(3)>0$, there exist constants $\hat\delta(3),\delta(|f|_\infty)>0$ such that
\begin{equation}\label{eq hatQ3_delta}
Q_3\ge\hat\delta(3) \sum^2_{i=1}(\lambda_i+\lambda_3)g^{ii}g^{33}u^2_{ii3} \ge3\delta \sum^3_{i=1}(\Delta\cdot)^{-1}g^{ii}u^2_{ii3}.
\end{equation}
When $\gamma=1$, let $0<\hat \epsilon<1$, we split (\ref{eq_Q_gamma}) into two parts, and consider the following two quadratic forms,
\begin{equation}\label{eq_hatQ1}
    \begin{aligned}
    &\hat Q^{(1)}_1= (A+\Delta\cdot)[(1+\hat \epsilon)\lambda_1 g^{11}u_{111}^2+2\lambda_2g^{22}u_{221}^2]
-(1+\epsilon)[\sum^2_{i=1}(1-\tau_{3i})u_{ii1}]^2,\\
&\hat Q^{(2)}_1= (1-\hat \epsilon)\lambda_1g^{11}g^{11}u_{111}^2+2(\lambda_2+\lambda_1\tau_{21})g^{22} g^{22}u_{221}^2
+2[\lambda_3+(\lambda_1+\lambda_3)\tau_{31}]g^{33}g^{33}u_{331}^2.
    \end{aligned}
\end{equation}
We need to determine $\hat \epsilon$, $\epsilon$ such that the following two inequalities holds,
\begin{align*}
&1-(1+\epsilon)[\frac1{1+\hat\epsilon}\frac{(1-\tau_{31})^2g_{11}}{\lambda_1(A+\Delta\cdot)}+\frac{(1-\tau_{32})^2g_{22}}{2\lambda_2(A+\Delta\cdot)}]\\
&\ge1-(1+\epsilon)\frac1{\Delta\cdot}[\frac1{1+\hat\epsilon}\lambda_1(1-\frac{\lambda_3^2}{\lambda_1^2})+\frac12\lambda_2(1-\frac{\lambda_3^2}{\lambda_2^2})]\\
&\ge1-\frac{1+\epsilon}{1+\nu+\nu_1}[\frac{1+\nu_1}{1+\hat\epsilon}+\frac{1+\nu}2(1-\frac1{(1+\nu)^2})]\\
&\ge 1-\frac{1+\epsilon}{1+\hat\epsilon}[1-\frac\nu{1+\nu+\nu_1}(1-\frac{1+\hat\epsilon}2\frac{2+\nu}{1+\nu})]\ge0.
\end{align*}
\begin{align*}
&[2 (1-\hat \epsilon)^{-1}-1]\frac1{\lambda_1}+[(\lambda_2+\lambda_1\tau_{21})^{-1}-\frac1{\lambda_2}]
+[\lambda_3+(\lambda_1+\lambda_3)\tau_{31}]^{-1}-\frac1{\lambda_3}\\
&\le \frac1{\lambda_1}[\frac{1+\hat \epsilon}{1-\hat \epsilon}-\frac{\lambda_1}{\lambda_1+\lambda_2}]+\frac1\lambda_3[\frac1{1+\frac{\lambda_3}{\lambda_1}
+\frac{\lambda_3^2}{\lambda_1^2}}-1]\\
&\le \frac1{\lambda_1}[\frac{1+\hat \epsilon}{1-\hat \epsilon}-\frac{\lambda_1}{\lambda_1+\lambda_2}-\frac{\lambda_1+\lambda_3}{\frac54\lambda_1+\lambda_3}]\le \frac1{\lambda_1}[\frac{1+\hat \epsilon}{1-\hat \epsilon}-\frac{7}{6}] \le 0.
\end{align*}
In the second inequality we assume that $\lambda_3+(\lambda_1+\lambda_3)\tau_{31}<0$. Now the first inequality holds with $1>\hat\epsilon=2\epsilon= 2\times10^{-2}$, the second inequality allows $\hat\epsilon\le\frac1{13}$. Thus for a smaller $\epsilon(3)>0$,  the first inequality implies $\hat Q_1^{(1)}\ge\hat\delta(3)(\sum^2_{i=1}\lambda_ig^{ii}g^{ii}u^2_{ii\gamma})(A+\Delta\cdot)$. Recall $\sum^3_{i=1}g^{ii}u_{ii\gamma}=0$, then for $A=A(|f|_\infty)$, $\delta=\delta(|f|_\infty)>0$,
\begin{equation}\label{eq hatQ1_delta}
Q_1\ge\frac14\hat\delta(3) \sum^3_{i=1}|\lambda_i|g^{ii}g^{ii}u^2_{ii1} \ge3\delta \sum^3_{i=1}(A+\Delta\cdot)^{-1}g^{ii}u^2_{ii1}.
\end{equation}

\noindent When $\gamma=2$, we consider the following quadratic forms with new parameters $\hat \epsilon$, $\epsilon$,
\begin{equation}\label{eq hatQ2}
\begin{aligned}
&\hat Q^{(1)}_2= (A+\Delta\cdot)[2\lambda_1 g^{11}u_{112}^2+(1-\hat\epsilon)\lambda_2g^{22}u_{222}^2]
-(1+\epsilon)[\sum^2_{i=1}(1-\tau_{3i})u_{ii2}]^2,\\
&\hat Q^{(2)}_2=2(\lambda_1+\lambda_2\tau_{12})g^{11}g^{11}u_{112}^2+(1+\hat \epsilon)\lambda_2g^{22} g^{22}u_{222}^2
+2[\lambda_3+(\lambda_2+\lambda_3)\tau_{32}]g^{33}g^{33}u_{332}^2.
\end{aligned}
\end{equation}
We want to determine the sign of $\hat Q^{(1)}_2,\hat Q^{(2)}_2$. We calculate that
\begin{align*}
P_1=&1-(1+\epsilon)[\frac1{2}\frac{(1-\tau_{31})^2g_{11}}{\lambda_1(A+\Delta\cdot)}+\frac1{1-\hat\epsilon}\frac{(1-\tau_{32})^2g_{22}}{\lambda_2(A+\Delta\cdot)}]\\
&\ge1-(1+\epsilon)\frac1{\Delta\cdot}[\frac{1}{2}\lambda_1(1-\frac{\lambda_3^2}{\lambda_1^2})+\frac{1}{1-\hat\epsilon}\lambda_2(1-\frac{\lambda_3^2}{\lambda_2^2})].
\end{align*}
\begin{align*}
P_2&=[\lambda_1+\lambda_2\tau_{12}]^{-1}-\frac1\lambda_1+(\frac2{1+\hat\epsilon}-1)\frac1{\lambda_2}
+[\lambda_3+(\lambda_2+\lambda_3)\tau_{32}]^{-1}-\frac1{\lambda_3}\\
&\le %-\frac1{\lambda_2}[\frac{\lambda_2}{\lambda_1+\lambda_2}(1+o(1))]
\frac1{\lambda_2}\frac{1-\hat\epsilon}{1+\hat\epsilon}-\frac1\lambda_2\frac{1+\frac{\lambda_3}{\lambda_2}}{1+\frac{\lambda_3}{\lambda_2}
+\frac{\lambda_3^2}{\lambda_2^2}}.
\end{align*}
We choose $\hat \epsilon=\frac{\lambda_3^2}{\lambda_2^2}$, recall $\lambda_1=(1+\nu_1)|\lambda_3|\ge 2|\lambda_3|$, $\lambda_2=(1+\nu)|\lambda_3|$,
\begin{align*}
P_1&\ge1-\frac{1+\epsilon}{1+\nu+\nu_1}[\frac{1}{2}(1+\nu_1)+1+\nu
-\frac1{2(1+\nu_1)}]\\
&\ge 1-(1+\epsilon)(1-\frac12\frac{\nu_1-1}{1+\nu+\nu_1}-\frac1{2(1+\nu_1)})
\ge 1-\frac{11}{12} (1+\epsilon)\ge 0,\\
P_2&= -\frac1{\lambda_2}[\frac{\nu(1+\nu)}{1+\nu(1+\nu)}-\frac{\nu(2+\nu)}{1+(1+\nu)^2}]
\le0,
\end{align*}
if we choose $\epsilon=\epsilon(3)$ small. Similar to $\gamma=1$, for a smaller $\epsilon=\epsilon(3)>0$ we have
\begin{equation}\label{eq hatQ2_delta}
Q_2\ge \hat\delta(3) [\lambda_1g^{11}g^{22}u^2_{112}+\lambda_2(1-\frac{\lambda_3^2}{\lambda_2^2})g^{22}g^{22}u^2_{222}]\ge 3\delta(A+\Delta\cdot)^{-1}\sum^3_{i=1}g^{ii}u^2_{ii2}.
\end{equation}
Finally, we return to the original $u_{ii\gamma}$ in $Q_\gamma$ and get the trace Jacobi inequality by (\ref{eq_35}) (\ref{eq_Q_gamma}) (\ref{eq hatQ3_delta}) (\ref{eq hatQ1_delta}) (\ref{eq hatQ2_delta}),
$$
   \begin{aligned}
&\Delta_g|A+\Delta\cdot|-(1+\frac\epsilon2)\frac{|\nabla_g|A+\Delta\cdot||^2}{A+\Delta\cdot}\\ &\ge\delta|A+\Delta\cdot|^{-1}\sum^3_{\gamma,i=1}g^{ii}u^2_{ii\gamma}+ \frac{\Delta f} f\sum^3_{i=1}g^{ii}\lambda_{i}-{C_f}T_{g^{-1}}|A+\Delta\cdot|.
   \end{aligned}
$$
\textbf{Case 2.} $\lambda_3\ge0$. We leave the proof to the next subsection for all $n\ge3$.
\end{proof}

\subsection{Proof to Lemma \ref{lemma_Trace_Jacobi} when a) $\Theta\ge\frac\pi2(n-2), n\ge4$; b) $\lambda_n \ge0,$ $n\ge 3$}
\begin{proof}
When $n\ge4$ the situation may get a little better. The argument is based on the special case $n=3$.\\
\textbf{Case 1.} $\lambda_n<0$. We assume that $n\ge4$. We still have the equality
$$
\sum^n_{i=1}
2\lambda_i g^{ii}g^{ii}u_{ii\gamma}^2\ge 0,
$$
since $\sum^n_{i=1}\frac1{\lambda_i}\le 0$ (Wang-Yuan \cite{WY11}, Lemma 2.1), and $\sum^n_{i=1}g^{ii}u_{ii\gamma}=0$ for all $1\le \gamma\le n$. We also have a basic fact that $\lambda_i\ge (n-i)|\lambda_n|$, $1\le i\le n-1$ (Wang-Yuan \cite{WY11}, Lemma 2.1), in particular we get $\lambda_{n-2}\ge 2|\lambda_n|$. Let $\lambda_i=(1+\nu_i)|\lambda_n|$, $\tau_{ij}=g_{ii}/g_{jj}$, $1\le i,j\le n$. We have
\begin{equation}\label{eq_case_22n}
(\sum^n_{i=1}u_{ii\gamma})^2
= [\sum^{n-1}_{i=1}(1-\tau_{ni})u_{ii\gamma}]^2.
\end{equation}
When $\gamma=n$, we consider the quadratic form
\begin{equation}\label{eq_hatQ3n}
\hat Q_n= \sum^{n-1}_{i=1}
2(\lambda_i+\lambda_n)(A+\Delta\cdot) g^{ii}u_{iin}^2
-(1+\epsilon)[\sum^{n-1}_{i=1}(1-\tau_{ni})u_{iin}]^2.
\end{equation}
Our goal is to prove $\hat Q_n\ge0$ for some $\epsilon=\epsilon(n)>0$. We have
\begin{align*}
&1-(1+\epsilon)\sum^{n-1}_{i=1}\frac{(1-\tau_{ni})^2g_{ii}}{2(\lambda_i+\lambda_n)\Delta\cdot}\ge 1-(1+\epsilon)\frac1{2\Delta\cdot}\sum^{n-1}_{i=1}(\lambda_i-\lambda_n)(1-\frac{\lambda_n^2}{\lambda_i^2})\\
&\ge1-(1+\epsilon)[1-\frac{\frac12\sum^{n-1}_{i=1}\nu_i-1}{n-2+\sum^{n-1}_{i=1}\nu_i}]
\ge 1-(1+\epsilon)(1-\frac1{6})\ge 0.
\end{align*}
Thus for a smaller $\epsilon(n)>0$ and constants $\hat\delta=\hat\delta(n), \delta=\delta(n,|f|_\infty)>0$ such that
\begin{equation}\label{eq hatQ3_deltan}
Q_\gamma\ge\hat\delta(n) \sum^{n-1}_{i=1}(\lambda_i+\lambda_n)g^{ii}g^{nn}u^2_{iin} \ge3\delta \sum^n_{i=1}(\Delta\cdot)^{-1}g^{ii}u^2_{iin}.
\end{equation}
When $\gamma\le n-2$, let $0<\hat \epsilon<1$, we split (\ref{eq_Q_gamma}) into two parts, and consider the following two quadratic forms,
\begin{equation}\label{eq_hatQ1n}
    \begin{aligned}
\hat Q^{(1)}_\gamma&= (A+\Delta\cdot)[(1+\hat \epsilon)\lambda_\gamma g^{\gamma\gamma}u_{\gamma\gamma\gamma}^2+2\sum^{n-1}_{i\neq\gamma}\lambda_i g^{ii}u_{ii\gamma}^2]
-(1+\epsilon)[\sum^{n-1}_{i=1}(1-\tau_{ni})u_{ii\gamma}]^2,\\
\hat Q^{(2)}_\gamma&= (1-\hat \epsilon)\lambda_\gamma g^{\gamma\gamma}g^{\gamma\gamma}u_{\gamma\gamma\gamma}^2+2\sum^{n-1}_{i\neq\gamma}(\lambda_i+\lambda_\gamma\tau_{i\gamma})g^{ii} g^{ii}u_{ii\gamma}^2\\
&+2[\lambda_n+(\lambda_\gamma+\lambda_n)\tau_{n\gamma}]g^{nn}g^{nn}u_{nn\gamma}^2.
    \end{aligned}
\end{equation}
Assume that $\lambda_n+(\lambda_\gamma+\lambda_n)\tau_{n\gamma}<0$, we need to determine $\hat \epsilon<1$, $\epsilon$ such that the following two inequalities holds,
\begin{align*}
&1-(1+\epsilon)[\frac1{1+\hat\epsilon}\frac{(1-\tau_{n\gamma})^2g_{\gamma\gamma}}{\lambda_\gamma(A+\Delta\cdot)}
+\sum^{n-1}_{i\neq\gamma}\frac{(1-\tau_{ni})^2g_{ii}}{2\lambda_i(A+\Delta\cdot)}]\\
&\ge1-(1+\epsilon)\frac1{\Delta\cdot}[\frac1{1+\hat\epsilon}\lambda_\gamma(1-\frac{\lambda_n^2}{\lambda_\gamma^2})
+\frac12\sum^{n-1}_{i\neq\gamma}\lambda_i(1-\frac{\lambda_n^2}{\lambda_i^2})]\\
&\ge1-\frac{1+\epsilon}{n-2+\sum^{n-1}_{i=1}\nu_i}[\frac{1+\nu_\gamma}{1+\hat\epsilon}(1-\frac1{(1+\nu_\gamma)^2})
+\sum^{n-1}_{i\neq\gamma}\frac{1+\nu_i}2(1-\frac1{(1+\nu_i)^2})]\\
&\ge 1-\frac{1+\epsilon}{1+\hat\epsilon}[1-\frac1{n-2+\sum^{n-1}_{i=1}\nu_i}(n-3+\frac1{1+\nu_\gamma}
+\frac12\sum^{n-1}_{i\neq\gamma}\frac{(1-\hat\epsilon)\nu_i^2-2\hat\epsilon\nu_i}{1+\nu_i})]\\
&\ge 1-\frac{1+\epsilon}{1+\hat\epsilon}[1-\frac1{n-2+\sum^{n-1}_{i=1}\nu_i}(n-4+\frac1{1+\nu_\gamma}
+\frac12\sum^{n-2}_{i\neq\gamma}\frac{(1-\hat\epsilon)\nu_i^2-2\hat\epsilon\nu_i}{1+\nu_i})]\ge0.
\end{align*}
\begin{align*}
&[2 (1-\hat \epsilon)^{-1}-1]\frac1{\lambda_\gamma}+\sum^{n-1}_{i\neq\gamma}[(\lambda_i+\lambda_\gamma\tau_{i\gamma})^{-1}-\frac1{\lambda_i}]
+[\lambda_n+(\lambda_\gamma+\lambda_n)\tau_{n\gamma}]^{-1}-\frac1{\lambda_n}\\
&\le \frac1{\lambda_\gamma}[\frac{1+\hat \epsilon}{1-\hat \epsilon}-\frac{\lambda_\gamma}{\lambda_\gamma+\lambda_{n-1}}]+\frac1\lambda_n[\frac1{1+\frac{\lambda_n}{\lambda_\gamma}
+\frac{\lambda_n^2}{\lambda_\gamma^2}}-1]\\
&\le \frac1{\lambda_\gamma}[\frac{1+\hat \epsilon}{1-\hat \epsilon}-\frac{\lambda_\gamma}{\lambda_\gamma+\lambda_{n-1}}-\frac{\lambda_\gamma+\lambda_n}{\frac54\lambda_\gamma+\lambda_n}]\le \frac1{\lambda_\gamma}[\frac{1+\hat \epsilon}{1-\hat \epsilon}-\frac{7}{6}] \le 0.
\end{align*}
The first inequality holds with $\hat\epsilon=2\epsilon =10^{-2}$ since $\nu_{n-2}\ge1$, the second inequality allows $\hat\epsilon\le\frac1{13}$, where we use $\lambda_{n-2}\ge 2|\lambda_n|$. Thus for a smaller $\epsilon(n)>0$, we have constants $\hat\delta(n)$, $A=A(n,|f|_\infty)$, $\delta=\delta(n,|f|_\infty)>0$ such that,
\begin{equation}\label{eq hatQ1_deltan}
Q_\gamma\ge\hat\delta(n) \sum^{n-1}_{i=1}|\lambda_i|g^{ii}g^{ii}u^2_{ii1} \ge3\delta \sum^n_{i=1}(A+\Delta\cdot)^{-1}g^{ii}u^2_{ii1}.
\end{equation}

\noindent When $\gamma=n-1$, we consider the following quadratic forms with new $\hat \epsilon$, $\epsilon$,
\begin{equation}\label{eq hatQ2n}
\begin{aligned}
\hat Q^{(1)}_\gamma&= (A+\Delta\cdot)[2\sum^{n-1}_{i\neq\gamma}\lambda_i g^{ii}u_{ii\gamma}^2+(1-\hat\epsilon)\lambda_\gamma g^{\gamma\gamma}u_{\gamma\gamma\gamma}^2]
-(1+\epsilon)[\sum^{n-1}_{i=1}(1-\tau_{ni})u_{ii\gamma}]^2,\\
\hat Q^{(2)}_\gamma&=2\sum^{n-1}_{i\neq\gamma}(\lambda_i+\lambda_\gamma\tau_{i\gamma})g^{ii}g^{ii}u_{ii\gamma}^2+(1+\hat \epsilon)\lambda_\gamma g^{\gamma\gamma} g^{\gamma\gamma}u_{\gamma\gamma\gamma}^2
+2[\lambda_n+(\lambda_\gamma+\lambda_n)\tau_{n\gamma}]g^{nn}g^{nn}u_{nn\gamma}^2.
\end{aligned}
\end{equation}
We want to determine the sign of $\hat Q^{(1)}_\gamma,\hat Q^{(2)}_\gamma$. Let $\hat \epsilon=\frac{\lambda_n^2}{\lambda_\gamma^2}$, we need to choose $\epsilon$ such that the following two inequalities hold,
\begin{align*}
P_1&=1-(1+\epsilon)[\frac1{2}\sum^{n-1}_{i\neq\gamma}\frac{(1-\tau_{ni})^2g_{ii}}{\lambda_i(A+\Delta\cdot)}
+\frac1{1-\hat\epsilon}\frac{(1-\tau_{n\gamma})^2g_{\gamma\gamma}}{\lambda_\gamma(A+\Delta\cdot)}]\\
&\ge1-(1+\epsilon)\frac1{A+\Delta\cdot}[\frac{1}{2}\sum^{n-1}_{i\neq\gamma}\lambda_i(1-\frac{\lambda_n^2}{\lambda_i^2})
+\frac{1}{1-\hat\epsilon}\lambda_\gamma(1-\frac{\lambda_n^2}{\lambda_\gamma^2})]\\
&\ge1-(1+\epsilon)\frac1{n-2+\sum^{n-1}_{i=1}\nu_i}[\frac{3}{4}\sum^{n-1}_{i=1}\nu_i+\frac34(n-2)]\ge 1-\frac34({1+\epsilon})\ge0.
\end{align*}
\begin{align*}
P_2&=\sum^{n-1}_{i\neq\gamma}[(\lambda_i+\lambda_\gamma\tau_{i\gamma})^{-1}-\frac1\lambda_i]+(\frac2{1+\hat\epsilon}-1)\frac1{\lambda_\gamma}
+[\lambda_n+(\lambda_\gamma+\lambda_n)\tau_{n\gamma}]^{-1}-\frac1{\lambda_n}\\
&\le\frac1{\lambda_\gamma}\frac{1-\hat\epsilon}{1+\hat\epsilon}
-\frac1\lambda_\gamma[\frac{1+\frac{\lambda_n}{\lambda_\gamma}}{1+\frac{\lambda_n}{\lambda_\gamma}
+\frac{\lambda_n^2}{\lambda_\gamma^2}}]
=\frac1{\lambda_\gamma}[\frac{\nu_\gamma(2+\nu_\gamma)}{1+(1+\nu_\gamma)^2}-\frac{\nu_\gamma(1+\nu_\gamma)}{1+\nu_\gamma+\nu_\gamma^2}]\le 0.
\end{align*}
We can choose $\epsilon=\frac14$. Thus for a smaller $\epsilon=\epsilon(n)$ and $\delta=\delta(n,|f|_\infty)>0$,
\begin{equation}\label{eq hatQ2_deltan}
Q_\gamma\ge \hat\delta(n) [\sum^{n-1}_{i\neq\gamma}\lambda_ig^{ii}g^{\gamma\gamma}u^2_{ii\gamma}
+\lambda_\gamma(1-\frac{\lambda_n^2}{\lambda_\gamma^2})g^{\gamma\gamma}g^{\gamma\gamma}u^2_{\gamma\gamma\gamma}]\ge 3\delta(A+\Delta\cdot)^{-1}\sum^n_{i=1}g^{ii}u^2_{ii\gamma}.
\end{equation}
By (\ref{eq hatQ3_deltan}) (\ref{eq hatQ1_deltan}) (\ref{eq hatQ2_deltan}) we get for $\epsilon=\epsilon(n)$, $A=A(n,|f|_\infty)>0$, $\delta=\delta(n,|f|_\infty)>0$,
\begin{equation}\label{eq_QGamma_nCri}
    Q_\gamma\ge3\delta(A+\Delta\cdot)^{-1}\sum^{n}_{i=1}g^{ii}u^2_{ii\gamma}.
\end{equation}
\textbf{Case 2.} $\lambda_n\ge0$. Here we prove the general convex case for all dimension $n\ge 3$.\\
Let $m=m(n)<1$, $M=M(n)>1$ be some small/large constants. Recall we have $\sum^n_{i=1} g^{ii}u_{ii\gamma}=0$ by reducing the derivative of $f$ to $u_{ii\gamma}$, we consider the quadratic form
$$
Q_\gamma=\sum^{n}_{i=1}
2\lambda_i(A+\Delta\cdot) g^{ii}u_{ii\gamma}^2
-(1+\epsilon)(\sum^{n}_{i=1}u_{ii\gamma})^2.
$$
If $\lambda_{n}\ge m(n)$, to verify $Q_\gamma\ge0$ we only need $\epsilon=\frac12$, $A=A(n,|f|_\infty)$ large such that
\begin{align*}
&1-(1+\epsilon)\sum^{n}_{i=1}\frac{g_{ii}}{2\lambda_i(A+\Delta\cdot)}\ge 1-\frac{1+\epsilon}{2(A+\Delta\cdot)}\sum^{n}_{i=1}(\lambda_i+\frac1m |f|^2_{\infty})\ge 1-\frac{1+\epsilon}2\ge0.
\end{align*}
If $\lambda_{1}\le M(n)$, we verify directly that for $A=nM$,
\begin{align*}
&(\sum^{n}_{i=1}u_{ii\gamma})^2=[\sum^{n}_{i=1}(u_{ii\gamma}-f^2g^{ii}u_{ii\gamma})]^2\le nM\sum^{n}_{i=1}\lambda_ig^{ii}u^2_{ii\gamma}
\le \sum^{n}_{i=1}
\lambda_i(A+\Delta\cdot) g^{ii}u_{ii\gamma}^2.
\end{align*}
Otherwise $\lambda_n<m(n)$, $\lambda_1>M(n)$, we split $\{\lambda_i\}^n_{i=1}$ into two parts,
$$
\left\{
\begin{array}{ll}
 (\lambda_1,\cdots,\lambda_k)    & \lambda_k\ge m   \\
(\lambda_{k+1},\cdots ,\lambda_{n})   &  \lambda_{k+1}< m     \\
\end{array} \right.
,
$$
where $k\le n-1$. We have the following relation,
\begin{equation}\label{eq_case_22}
\begin{aligned}
\sum^k_{i=1}\frac {u^2_{ii\gamma}}{(f^2+\lambda_i^2)^2}&\ge \frac1n(\sum^k_{i=1}\frac{ u_{ii\gamma}}{f^2+\lambda_i^2})^2=\frac1n(\sum^n_{i=k+1}\frac{u_{ii\gamma}}{f^2+\lambda_i^2})^2\\
& \ge \frac{1}{n^2f^4}(\sum^n_{i=k+1}u_{ii\gamma})^2-n^2m\sum^n_{i=k+1}\lambda_ig^{ii}u^2_{ii\gamma}.
\end{aligned}
\end{equation}
We choose $A\ge4n^6|f|^4_\infty/m$, by (\ref{eq_case_22}) we have
$$
\sum^k_{i=1}\frac12\lambda_i(A+\Delta\cdot)g^{ii}u^2_{ii\gamma}
+\sum^n_{i=k+1}2\lambda_i(A+\Delta\cdot)g^{ii}u^2_{ii\gamma}\ge {n^2}(\sum^n_{i=k+1}u_{ii\gamma})^2.
$$
We get
$$
Q_\gamma \ge \sum^{k}_{i=1}
\frac32\lambda_i(A+\Delta\cdot) g^{ii}u_{ii\gamma}^2 +n^2(\sum^n_{i=k+1}u_{ii\gamma})^2 -(1+\epsilon)(\sum^n_{i=1}u_{ii\gamma})^2.
$$
Now we choose $A=\frac1m(4n^6|f|_\infty^4+n|f|_\infty^2)$, $\epsilon=\frac1{6}$,
\begin{align*}
1-(1+\epsilon)(\sum^{k}_{i=1}\frac{2g_{ii}}{3\lambda_i(A+\Delta\cdot)}+\frac1{n^2})&\ge 1-\frac23\frac{1+\epsilon}{(A+\Delta\cdot)}\sum^{k}_{i=1}(\lambda_i+\frac1m |f|^2_{\infty})+\frac{1+\epsilon}{n^2}\\
&\ge 1-\frac56(1+\epsilon)\ge0.
\end{align*}
Thus for a smaller $\epsilon(n)$, and $A=A(n,|f|_\infty)$, $\delta=\delta(n,|f|_\infty)>0$,
\begin{equation}\label{eq hatQ2_deltanCovex}
Q_\gamma\ge \delta(n)\sum^n_{i=1}|\lambda_i|g^{ii}g^{ii}u^2_{ii\gamma}\ge3\delta(A+\Delta\cdot)^{-1}\sum^{n}_{i=1}g^{ii}u^2_{ii\gamma}.
\end{equation}
Finally, we return to the original $u_{ii\gamma}$ in $Q_\gamma$ and get the trace Jacobi inequality by (\ref{eq_35}) (\ref{eq_QGamma_nCri}) (\ref{eq hatQ2_deltanCovex}) that
$$
   \begin{aligned}
&\Delta_g|A+\Delta\cdot|-(1+\frac\epsilon2)\frac{|\nabla_g|A+\Delta\cdot||^2}{A+\Delta\cdot}\\ &\ge3\delta|A+\Delta\cdot|^{-1}\sum^n_{\gamma,i=1}g^{ii}u^2_{ii\gamma}+ \frac{\Delta f} f\sum^n_{i=1}g^{ii}\lambda_{i}-{C_f}T_{g^{-1}}|A+\Delta\cdot|.
   \end{aligned}
$$
%When $\Theta\ge c(n)+\frac\pi2(n-2)$ or $\lambda_n\ge 0$,
%$$
%   \begin{aligned}
%&\Delta_g|A+\Delta\cdot|-(1+\frac\epsilon2)\frac{|\nabla_g|A+\Delta\cdot||^2}{A+\Delta\cdot}\\ &\ge3\delta\sum^n_{\gamma,i=1}|\lambda_i|g^{ii}g^{ii}u^2_{ii\gamma}+ \frac{\Delta f} f\sum^n_{i=1}g^{ii}\lambda_{i}-{C_f}|A+\Delta\cdot|.
%   \end{aligned}
%$$
\end{proof}

\section{A Prior Interior Hessian Estimates}

\subsection {Proof to Theorem \ref{Thm_Upbd_critical}}
\begin{proof}
\textbf{Step 1.} Up to a scale we will assume $f\ge1$. We prove a mean value inequality similar to Proposition \ref{prop_Meanvalue}, to eliminate the term $\Delta f$. Let $U\subset \mathcal{M}$ be a open set, $\varphi(x)\in C_0^\infty(U)$ be a nonnegative function. Let $A\ge 3$, by (\ref{eq_Trace_Jacobin1}) we have
\begin{align*}
&\int_{U}\ln|A+\Delta\cdot| \Delta_g \varphi dv_g\ge\frac\epsilon2\int_{U}|\nabla_g\ln|A+\Delta\cdot||^2\varphi dv_g+
\delta\int_{U}\frac{\sum^n_{\gamma,i=1}g^{ii}u^2_{ii\gamma}}{|A+\Delta\cdot|^2}\varphi dv_g \\
&-n\int_{U}\frac{|D\varphi Df|}{A+\Delta\cdot} dv_g-
\int_{U}DfD(\frac{\sum^n_{i=1}g^{ii}\lambda_{i}}{ f|A+\Delta\cdot|}V)\varphi dx-C_f\int_{U}T_{g^{-1}}\varphi dv_g.
\end{align*}
where $V=\Pi^n_{i=1}\sqrt{1+\lambda_i^2}$. Pointwisely we can choose a coordinate system such that $D^2u$ is diagonal, and
\begin{align*}
&|Df D V|=|\sum^n_{\gamma,i=1}\frac{\lambda_i\lambda_{i,\gamma}}{1+\lambda_i^2}V D_\gamma f|
\le \frac1{4n}\delta\frac{\sum^n_{\gamma,i=1}g^{ii}u^2_{ii\gamma}}{A+\Delta\cdot}V+C_f|A+\Delta\cdot| V,\\
&|Df D\Delta\cdot|\le \frac1{4n}\epsilon|\nabla_g|A+\Delta\cdot||^2+C_f|A+\Delta\cdot|^2,
\end{align*}
\begin{align*}
&|Df D\sum^n_{i=1}g^{ii}\lambda_i|\le C(n)\sum^n_{\gamma,i=1}|\frac{|\lambda_{i,\gamma}|+C_f}{1+\lambda_i^2}D_\gamma f|
\le \frac1{4}\delta\frac{\sum^n_{\gamma,i=1}g^{ii}u^2_{ii\gamma}}{A+\Delta\cdot}+C_f|A+\Delta\cdot| .
\end{align*}
Let $b=\ln|A+\Delta\cdot|>1$, notice that $f\ge1,|\sum^n_{i=1}g^{ii}\lambda_{i}|\le n$, we get
\begin{equation}\label{eq_11}
\int_{U}b\Delta_g\varphi dv_g\ge
%\frac\delta2\int_{\mathcal{B}_1(0)}\frac{\sum^3_{\gamma=1}[\ ]_\gamma}{A+\Delta\cdot}\varphi dv_g
-n\int_{U}\frac{|D\varphi Df|}{A+\Delta\cdot} dv_g-C_f\int_{U} \varphi dv_g.
\end{equation}
Suppose that $Du(0)=0$, let $r=\sqrt{f^2|x|^2+|Du|^2}$, $S_\rho(0)=\{x\in \mathcal{M}|r<\rho\}\subset U$. Let $\chi(t)\in C^\infty(\mathbb{R})$ be a non-decreasing function such that $\chi(t)=0$ when $t\le0$, $\chi(t)=1$ when $t>\mu$ for some small constant $\mu>0$. Let $\rho<1$, we choose
$$
\varphi(r)=\int^{+\infty}_rt\chi(\rho-t)dt.
$$
We have $0\le\varphi\le \rho^2\chi(\rho-r)$, $|D\varphi(x)|\le C(n)\rho|A+\Delta\cdot|\chi(\rho-r)$, $|\nabla_g r|\le C_f$, and
$$
\Delta_g \varphi(r)\le (-n+C_fr-f\Delta_gf|x|^2)\chi(\rho-r)+(r+C_fr^2)\chi'(\rho-r).
$$
Since $b>1$, (\ref{eq_11}) becomes
\begin{align*}
&\int_{U}b[n\chi(\rho-t)-\rho\chi'(\rho-r)] dv_g\le
C_f[\rho\int_{U} b\chi(\rho-r)dv_g+\rho^2\int_{U} b\chi'(\rho-r)dv_g].
\end{align*}
Multiply the above inequality by $\rho^{-n-1}$,
$$
-(1+C_f\rho)\frac{d}{d\rho}\ln [\frac{1}{\rho^n}\int_{U}b\chi(\rho-t)dv_g]\le (n+1)C_f.
$$
Choose $\rho_0\in(0,1)$, integrate the above inequality over $(\sigma,\rho_0)$ for arbitrarily small constants $0<\mu<\sigma$, we get
$$
\sigma^{-n}\int_{S_{\sigma-\mu}(0)}bdv_g \le C_f\rho_0^{-n}\int_{S_{\rho_0}(0)}bdv_g.
$$
Let ${\mathcal{B}}_\rho(0)\subset U$ be the geodesic ball on $\mathcal{M}$ with radius $\rho$, we have $ \mathcal{B}_\frac{\rho}{|f|_\infty}(0)\subset S_{\rho}(0)\subset\mathcal{B}_{\rho|f|_\infty}(0)$. For general $0<\rho<1$ we get
\begin{equation}\label{eq_meanValue}
b(0)\le  C_f\rho^{-n}\int_{\mathcal{B}_\rho(0)}bdv_g.
\end{equation}
\textbf{Step 2.} We follow the technique developed in Wang-Yuan \cite{WY11} to get the final result. We substitute $U$ by ${B}_2(0)\in \mathbb{R}^n$, and choose the smooth test function $\varphi$ such that $\varphi\ge 0$ in ${B}_2(0)$, $\varphi=1$ in ${B}_1(0)$, $|D\varphi|<2$. Repeat the proof at the beginning of {Step 2.} we get for $C_f=C_f(n,||f||_{C^{0,1}({ B}_3(0))})$

$$
\int_{{{B}}_2(0)}\Delta_gb\varphi^2 dv_g-\frac\epsilon4\int_{{{B}}_2(0)}{|\nabla_gb|^2}\varphi^2 dv_g\ge
-C_f\int_{{B}_2(0)} \varphi^2dv_g.
$$
Recall $T_{g^{-1}}=\sum^n_{i=1}g^{ii}$, we get the gradient estimate for $b$,
\begin{equation}\label{eq_14}
\int_{{{B}}_2(0)}{|\nabla_gb|^2}\varphi^2dv_g\le
C_f\int_{{B}_2(0)}  \varphi^2dv_g.
\end{equation}
We use mean value inequality and Sobolev inequality (Proposition \ref{prop_Sobolev}) to bound $b(0)$,
\begin{equation}\label{eq_151}
\begin{aligned}
b(0)&\le C_*(\int_{{B}_1(0)}b^{\frac n{n-2}}dv_g)^\frac{n-2}{n}
    \le C_*[\int_{B_2(0)}(\varphi^{n+1} b^\frac12)^{\frac{2n}{n-2}}dv_g]^\frac{n-2}{n}\\
    &\le C_*T_0[\int_{B_2(0)}|\nabla_g (\varphi^{n+1} b^\frac12)|^2dv_g+ \int_{B_2(0)} b(|H|^2+ {T_{g^{-1}}})\varphi^{2n} dv_g],
\end{aligned}
\end{equation}
 where $C_*=C_f\cdot(\int_{B_1(0)}dv_g)^{\frac{2}{n}}$, $T_0=1+(\int_{B_2(0)}T_{g^{-1}}dv_g)^{\frac1{n(n-1)}}$. By Lemma \ref{lemma_00} the mean curvature $H$ is bounded by $T_{g^{-1}}$. Let $\hat\lambda_i=\lambda_i/f$,  $\hat\sigma_k=\sigma_k(\hat\lambda_1,\cdots,\hat\lambda_n)$. By Wang-Yuan \cite{WY11}, Theorem 1.1, step 3,
$$
T_{g^{-1}} Vf^{2-n}=
\cos\Theta\sum_{1\le2k+1\le n}(-1)^k(n-2k)\hat\sigma_{2k}-\sin\Theta\sum_{0\le2k\le n}(-1)^k(n-2k+1)\hat\sigma_{2k-1},
$$
Then by (\ref{eq_14}) (\ref{eq_151}) and the fact that $b>1$,
\begin{equation}\label{eq_meanvalue b}
b(0)\le C_*T_0[\int_{B_2(0)}b(1+\cdots+\sigma_{n-1})\varphi^{2n}dx+C_f\int_{{B}_2(0)}  \varphi^2dv_g].
\end{equation}
Here we need the divergence structure of $\sigma_k$,
$$
k\sigma_k(D^2u)=\sum_{i,j}\frac{\partial}{\partial x_i}(\frac{\partial\sigma_k}{\partial u_{ij}}\frac{\partial u}{\partial x_j})=div({L_{\sigma_k}Du}).
$$
$$
\int_{{B}_2(0)}b\sigma_k\varphi^{2n} dx=-\frac1k\int_{{B}_2(0)}<\varphi^{2n} D b+2nb\varphi^{2n-1}D\varphi,L_{\sigma_k}D u>dx.
$$
Since $|<D b,L_{\sigma_k}Du>|\le C(n)|D u| (|\nabla_g b|^2+1)V$, $\partial\sigma_k/\partial\lambda_i\ge0$ for all $1\le i\le n$ and all $1\le k\le n-1$ by Wang-Yuan \cite{WY11}, Lemma 2.1. Recall the gradient estimates (\ref{eq_14}), we have
$$
\int_{{B}_2(0)}b\sigma_k\varphi^{2n} dx\le C(n)||Du||_{L^\infty(B_2(0))}(\int_{{B}_2(0)}b\sigma_{k-1}\varphi^{2n-1}dx + C_f\int_{{B}_2(0)}dv_g).
$$
Notice that $b\le A+\Delta\cdot$, by iteration and the definition of $C_*, T_0$, we get
$$
b(0)\le C_f\cdot(1+||Du||^n_{L^\infty(B_2(0))})[1+(\int_{{B}_{2}(0)}dv_g)^{1+\frac3n}].
$$
Repeat the above process with $b=1$ to estimate the volume$\int dv_g$,  we get
$$
b(0)\le C(n,||Du||_{L^\infty(B_3(0))},||f||_{C^{0,1} ({ B}_{3}(0))}).
$$
\end{proof}

\subsection{A Prior Interior Gradient Estimates}
\begin{lemma}\label{lemma_23}
Let $u$ satisfies equation (\ref{gslag1}) with $f>0$. If $\lambda_n/f\le-1$, we have
$$
\frac{\lambda_i}{(f^2+\lambda_i^2)}+\frac{\lambda_n}{(f^2+\lambda_n^2)}\le 0,\ 1\le i\le n-1.
$$
\end{lemma}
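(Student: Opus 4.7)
The plan is to combine the two fractions and reduce the inequality to a single sign check:
$$
\frac{\lambda_i}{f^2+\lambda_i^2}+\frac{\lambda_n}{f^2+\lambda_n^2}=\frac{(\lambda_i+\lambda_n)(f^2+\lambda_i\lambda_n)}{(f^2+\lambda_i^2)(f^2+\lambda_n^2)},
$$
so, since the denominator is positive, the claim is equivalent to $(\lambda_i+\lambda_n)(f^2+\lambda_i\lambda_n)\le 0$.

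The crucial input is the eigenvalue lower bound $\lambda_i\ge|\lambda_n|$ for $1\le i\le n-1$, which comes from the paper's standing supercritical phase hypothesis $\Theta\ge\frac{\pi}{2}(n-2)$. Indeed, with normalized eigenvalues $\hat\lambda_j=\lambda_j/f$ satisfying the classical special Lagrangian equation $\sum_j\arctan\hat\lambda_j=\Theta$, Wang-Yuan \cite{WY11}, Lemma 2.1 (already invoked in Section 3 of the paper) gives $\hat\lambda_{n-1}\ge|\hat\lambda_n|$, and then $\hat\lambda_i\ge|\hat\lambda_n|$ for all $i\le n-1$ by the ordering. A more self-contained derivation is to isolate the two arctangents in the equation: since each $\arctan\hat\lambda_j<\pi/2$ strictly,
$$
\arctan\hat\lambda_i+\arctan\hat\lambda_n=\Theta-\sum_{j\neq i,n}\arctan\hat\lambda_j>\frac{\pi}{2}(n-2)-(n-2)\frac{\pi}{2}=0,
$$
and monotonicity of $\arctan$ yields $\hat\lambda_i>-\hat\lambda_n=|\hat\lambda_n|$, i.e.\ $\lambda_i>|\lambda_n|$.

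Combining $\lambda_i\ge|\lambda_n|$ with $|\lambda_n|\ge f$ (which is exactly the hypothesis $\hat\lambda_n\le-1$), the first factor satisfies $\lambda_i+\lambda_n\ge 0$, while $\lambda_i\lambda_n\le|\lambda_n|\cdot\lambda_n=-\lambda_n^2\le-f^2$ gives $f^2+\lambda_i\lambda_n\le 0$. The product of a nonnegative and a nonpositive quantity is nonpositive, which is exactly the desired estimate. There is no serious obstacle; the only conceptual step is recognizing that the correct handle is the eigenvalue gap supplied by the supercritical phase, after which the conclusion is a one-line algebraic verification.
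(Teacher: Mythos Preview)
Your proof is correct. Both you and the paper rely on the same essential input, the eigenvalue inequality $\lambda_i\ge|\lambda_n|$ for $i\le n-1$ coming from Wang--Yuan \cite{WY11}, Lemma 2.1 under the (super)critical phase hypothesis, and your self-contained derivation of this fact is valid.

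Where you diverge is in the execution. The paper rewrites each term trigonometrically as $\frac{\lambda_i}{f^2+\lambda_i^2}=\frac{1}{2f}\sin 2\theta_i$ with $\theta_i=\arctan(\lambda_i/f)$, and then argues via the monotonicity of $\sin$ on $[\pi/2,\pi)$: the hypothesis $\theta_n\le-\pi/4$ forces both $2\theta_i$ and $-2\theta_n$ into that interval, so $2\theta_i\ge-2\theta_n$ gives $\sin 2\theta_i\le-\sin 2\theta_n$. Your route is purely algebraic: factor the combined numerator as $(\lambda_i+\lambda_n)(f^2+\lambda_i\lambda_n)$ and check the sign of each factor directly. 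Your argument is arguably more transparent, since it avoids tracking the range of the angles and the monotonicity interval of sine; the trigonometric version, on the other hand, makes the connection to the phase variables $\theta_i$ explicit, which fits the paper's broader framework. Neither approach is deeper than the other---they are two equivalent ways to encode the same elementary inequality once $\lambda_i\ge|\lambda_n|\ge f$ is in hand.
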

\begin{proof}
We have
$\frac{\lambda_i}{f^2+\lambda_i^2}= \frac{1}{2f}\sin2\theta_i,$ $1\le i\le n$. Then $\pi+2\theta_n=\sum^{n-1}_{i=1}(\pi-2\theta_i)$, $2\theta_i\ge -2\theta_n$ by Wang-Yuan \cite{WY11}, Lemma 2.1, thus $\sin(2\theta_i)\le-\sin(2\theta_n)$ if $\theta_n\le -\frac\pi4$.
\end{proof}

\subsection{Proof of Theorem \ref{Thm2}}
\begin{proof}
The proof follows Warren-Yuan \cite{WY10}.
Set $M=\mathop{osc}\limits_{B_1(0)}u$. We may assume $0<M\le u\le 2M$ in $B_1(0)$. Let
$$ w=\eta|Du|+Au^2+Bu$$
with $\eta=1-|x|^2$, $A=n/M$, $B=1+n||Df/f||_{L^\infty(B_1(0))}$. If $w$ attains its maximum on $\partial B_1(0)$, the conclusion is straightforward. Otherwise let $x^* \subset B_1(0)$ be a maximum point, choose a coordinate system such that $D^2u(x^*)$ is diagonal. Assume that $u_n(x^*)\ge \frac{|Du(x^*)|}{\sqrt n}>0$, we have at $x^*$
$$0=w_i=\eta|Du|_i+\eta_i|Du|+2Auu_i+ Bu_i,$$
which leads to
$$ |Du|_i=\frac{u_i\lambda_i}{|Du|}=-\frac{\eta_i|Du|+2Auu_i+Bu_i}{\eta}.$$
In particular we get $\lambda_n<0$ at $x^*$. More precisely,
\begin{align}\label{eq_temp_41}
 &\lambda_n \ge -\frac{1}{\eta}\frac{|Du|^2}{|u_n|}(|2Au|+B+|\eta_n|)
               %\ge -(4n+2)\sqrt n \frac{|Du|}{\eta}
               \ge -\sqrt n(B+4n+2)\frac{|Du|}{\eta},\\
&\lambda_n \le -\frac{1}{\eta}\frac{|Du|^2}{|u_n|}(\frac{2Au}{\sqrt n}-|\eta_n|)\label{eq_temp_42}
               \le -(2\sqrt n-2)\frac{|Du|}{\eta}.
\end{align}
If $\lambda_n\ge-f$, we already get the conclusion by (\ref{eq_temp_42}). Thus in the following we assume that $\lambda_n<-f$. According to Wang-Yuan \cite{WY11}, Lemma 2.1, we may assume that $\lambda_1 \ge \cdots \ge \lambda_{n-1} \ge |\lambda_n|$ at $x^*$. We denote
$\overline\Delta_g = g^{ij}\partial_{ij}$, by equality (\ref{eq_uijj}) and Lemma \ref{lemma_23} we have
\begin{equation}
\begin{aligned}\label{eq_temp_44}
\eta\overline\Delta_g|Du|
%            &= \eta\sum_{i,j,k=1}^ng^{ij}(\frac{u_k u_{kij}}{|Du|}+\frac{u_{ki}u_{kj}}{|Du|}-\sum_{l=1}^n \frac{u_k u_{ki} u_l u_{lj}}{|Du|^3})+B\lambda_ig^{ii}\\
            &=\sum_{i=1}^n[\eta\frac{(|Du|^2-u_i^2)\lambda_i^2 g^{ii}}{|Du|^3} +\eta\sum_{k=1}^n \frac{u_k \lambda_{i,k}g^{ii}}{|Du|}]\\
            &\ge\sum_{i=1}^n\eta\sum_{k=1}^n \frac{f_k}{f} \frac{u_k}{|Du|}\lambda_i g^{ii}
            \ge -C(n)B|\lambda_n|g^{nn}.
\end{aligned}
\end{equation}
%where we used equality (2.3). By Lemma 3.1, we only need to show
%\begin{align*}
%|\frac{\lambda_{n-1}g^{n-1,n-1}}{\lambda_{n}g^{n,n}}|=|\frac{\lambda_{n-1}}{f^2+\lambda_{n-1}^2}\frac{f^2+\lambda_{n}^2}{\lambda_{n}} |
%        \le \frac{2|\lambda_{n-1}\lambda_n|}{f^2+\lambda_{n-1}^2}\le 2.
%\end{align*}
\noindent Finally we can estimate $|Du|$ at $x^*$ using (\ref{eq_temp_41}) and (\ref{eq_temp_44}),
\begin{align*}
0&\ge \Delta_g w
                 \ge |Du|\overline\Delta_g\eta+ \eta\overline\Delta_g|Du| + \sum_{i=1}^n 2 g^{ii}\eta_i |Du|_i
                 + 2Au\overline\Delta_gu+B\overline\Delta_gu+ \sum_{i=1}^n2Ag^{ii}u_i^2\\
                 & \ge -2ng^{nn}|Du| -C(n)B|\lambda_n|g^{nn} -\sum_{i=1}^n2 g^{ii}\eta_i \frac{1}{\eta}(\eta_i|Du|+2Auu_i+Bu_i)
                  +2Ag^{nn}|u_n|^2\\
                 & \ge -2ng^{nn}|Du| -C(n)(B^2+1)\frac{1}{\eta}g^{nn}|Du| -C(n)(B+1)\frac{1}{\eta}g^{nn}|Du|
                  +\frac{2}{M}g^{nn}|Du|^2.
\end{align*}
Thus when $\lambda_n< -f(x^*)$ we have
$$ \frac{2}{M}\eta|Du(x^*)| -  C(n)(B^2+1)\le 0.$$
Recall $|Du(x^*)|\le C(n)f(x^*)$ when $\lambda_n\ge- f(x^*)$, we have
$$|Du(0)|\le C(n)(\|\frac{Df}{f}\|^2_{L^\infty(B_1(0))}+||f||_{L^\infty(B_1(0))}+1)\mathop{osc}\limits_{B_1(0)}u.
$$
Finally we scale by $\hat u=u/||f||_{L^\infty(B_1(0))}$, $\hat f=f/||f||_{L^\infty(B_1(0))}$ and get the result.
\end{proof}

\section {Dirichlet Problem for Generalized Slag}
We denote $F(D^2u)=\sum^n_{k=1}\arctan{\lambda_k}$. In the case of special Lagrangian equation, we have the fact that
$$
F_{m_{ij}}D_{ij}u_{\gamma\gamma}\ge0,
$$
due to the critical and supercritical phase condition. This means that the maximal point of $u_{\gamma\gamma}$ will attain at the boundary, thus we can get the $C^{1,1}$ estimates with the help of boundary data. Another observation made in Wang-Yuan \cite{WY11} is $\Delta_g \lambda_{\max}\ge 0$.

There are problems in getting similar results for the generalized special Lagrangian equation even if $f\in C^{2}$. Notice equality (\ref{eq_33}), we have for some bounded $C_0$
$$
F_{m_{ij}}D_{ij}u_{\gamma\gamma}=\sum^n_{k=1}\frac2{f^2+\lambda_k^2}[\lambda_k\lambda^2_{k,\gamma}
+(-\lambda_k^2f_\gamma+f^2f_\gamma)\lambda_{i,\gamma}]_*+C_0.
$$
%In order to get existence results, a crucial step is to restrict the maximal value of $u_{TT}$ on the boundary which require that $F_{m_{ij}}D_{ij}u_{\gamma\gamma}\ge -C$, thus we need to determine the sign of $[\ ]_*$, we have
%\begin{align*}
%    &\sum^3_{k=1}\frac{2\lambda_k\lambda^2_{k,\gamma}}{(h^2+\lambda_k^2)^2}\ge {\lambda_3}{\sum^2_{k=1}\frac{2(\lambda_k-\epsilon)\lambda_{k,\gamma}^2}{(h^2+\lambda_k^2)^2}}
%    (\frac1{\lambda_1-\epsilon}+\frac1{\lambda_2-\epsilon}+\frac1{\lambda_3})+{\sum^2_{k=1}\frac{2\epsilon\lambda_{k,\gamma}^2}{(h^2+\lambda_k^2)^2}}\\
%    &\ge{\lambda_3}{\sum^2_{k=1}\frac{2(\lambda_k-\epsilon)\lambda_{i,\gamma}^2}{(h^2+\lambda_k^2)^2}}
%    (\frac{h^2}{\det D^2u}+\frac\epsilon{\lambda_1(\lambda_1-\epsilon)}+\frac\epsilon{\lambda_2(\lambda_2-\epsilon)})+{\sum^2_{k=1}\frac{2\epsilon\lambda_{k,\gamma}^2}{(h^2+\lambda_k^2)^2}}.
%\end{align*}
%The maximal choice of $\epsilon$ would be $\frac{\delta\lambda_2}{\lambda_1|\lambda_3|}$, however we get
%$$
%\frac{\delta\lambda_2}{\lambda_1|\lambda_3|}\lambda^2_{2,\gamma}-h_\gamma\lambda_2^2\lambda_{2,\gamma}+\delta^2h_\gamma^2{\lambda_1|\lambda_3|}\lambda_2^3\ge 0,
%$$
we can only expect
$$
F_{m_{ij}}D_{ij}u_{\gamma\gamma}\ge -\delta^2\lambda_1.
$$
In this way we can not restrict $\lambda_{\max}$ to the boundary. We turn to another observation. We have proved the trace Jacobi inequality (\ref{eq_Trace_Jacobin1}), we can get a weaker conclusion,
$$
\Delta_g \max( |A+\Delta u|+K(|x|^2-1)\ge 0,
$$
for $K=K(n,||f||_{C^{1,1}})$ large.
Using the argument in a lecture note on special Lagrangian equations by Yu Yuan, 2016, we get an existence theorem for $f\in C^{2}$, by approximation it leads to $f\in C^{0,1}$.
\begin{theorem}\label{prop_existence}
Let $f(x)\in C^2(\overline B_1(0))$, $f\ge1$. Let $\varphi(x)\in C^4(\partial B_1(0)),\alpha\in (0,1)$. The following Dirichlet problem
$$ \left\{
\begin{array}{lcl}
F(D^2v,x)=\sum^n_{i=1}\arctan\frac{\lambda_i}{f}=\Theta      &      & B_1(0),\\
v=\varphi      &      & \partial B_1(0).
\end{array} \right. $$
has an unique solution $v\in C^{2,\alpha}( \overline B_1(0))$ if the constant $\Theta\in(\frac\pi2(n-2),\frac\pi2n)$.
\end{theorem}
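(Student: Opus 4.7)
The plan is to solve this Dirichlet problem by the method of continuity in $C^{2,\alpha}(\overline{B_1(0)})$. Fix a smooth subsolution $\underline\varphi$ extending the boundary data (say the harmonic extension of $\varphi$ perturbed by $K(|x|^2-1)$ with $K$ large enough that $F(D^2\underline\varphi, x) \ge \Theta$), and consider the family of problems
\begin{equation*}
F(D^2 v_t, x) = (1-t)F(D^2\underline\varphi, x) + t\Theta \ \text{in}\ B_1(0), \qquad v_t\big|_{\partial B_1(0)} = \varphi,
\end{equation*}
indexed by $t \in [0,1]$. The set $S = \{t\in [0,1] : v_t \ \text{exists in}\ C^{2,\alpha}\}$ contains $t=0$. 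Openness of $S$ follows from the implicit function theorem, since at any $C^{2,\alpha}$ solution the linearization $Lw = F_{m_{ij}}(D^2 v_t, x)D_{ij}w$ is uniformly elliptic (once global $C^{1,1}$ bounds are in hand) and hence boundedly invertible by Schauder theory. Closedness reduces to uniform-in-$t$ a priori $C^{2,\alpha}$ estimates.

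These estimates I would build in the usual hierarchy. The $C^0$ and global $C^1$ bounds follow from comparison against $\underline\varphi$ and a matching supersolution, together with Theorem \ref{Thm2} for the interior gradient and boundary linear barriers at $\partial B_1(0)$. Interior $C^{1,1}$ is Theorem \ref{Thm_Upbd_critical}. On the boundary, the tangent-tangent second derivatives of $v_t$ are obtained directly from $\varphi \in C^4$, and the mixed tangent-normal components from a Caffarelli-Nirenberg-Spruck \cite{CNS85}-type barrier applied to the linearized equation (using uniform ellipticity from the interior estimate). Once a global $C^{1,1}$ bound holds, the operator $F$ is uniformly elliptic with convex level sets (using $\Theta > \frac\pi2(n-2)$), so interior $C^{2,\alpha}$ comes from Evans-Krylov and boundary $C^{2,\alpha}$ from Krylov's boundary theorem, exactly as in the Yu Yuan 2016 lecture note referenced in the paper. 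Uniqueness is the comparison principle of Trudinger \cite{Tru90}.

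The main obstacle is the boundary double-normal estimate, and this is precisely where the trace Jacobi inequality Lemma \ref{lemma_Trace_Jacobi} is used. Because $f \in C^2$, the term $\frac{\Delta f}{f}\sum_i g^{ii}\lambda_i$ appearing in (\ref{eq_Trace_Jacobin1}) is bounded by $C(\|f\|_{C^{1,1}}) T_{g^{-1}}$, so Lemma \ref{lemma_Trace_Jacobi} yields
\begin{equation*}
\Delta_g(A + \Delta v_t) \ge -C(n,\|f\|_{C^{1,1}}) T_{g^{-1}}.
\end{equation*}
Adding the quadratic barrier $K(|x|^2 - 1)$, whose $\Delta_g$ contributes a positive multiple of $T_{g^{-1}}$ up to lower-order terms, the auxiliary function $w := A + \Delta v_t + K(|x|^2 - 1)$ satisfies $\Delta_g w \ge 0$ in $B_1(0)$ for $K = K(n,\|f\|_{C^{1,1}})$ sufficiently large. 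The maximum principle then pushes the supremum of $\Delta v_t$ to $\partial B_1(0)$; combining this with the already-controlled tangential components of $D^2 v_t|_{\partial B_1(0)}$ and with the equation $F(D^2 v_t, x) = \Theta_t$ evaluated on the boundary determines $v_{t,\nu\nu}$ up to a bound depending only on $n$, $\|f\|_{C^{1,1}}$, $\|\varphi\|_{C^4}$, and the previously established estimates. This is the single step where the $C^2$ hypothesis on $f$ (rather than merely Lipschitz) is genuinely needed, and it is what distinguishes the present existence statement from the $C^{0,1}$-data version recorded in Theorem \ref{coro_existence}.
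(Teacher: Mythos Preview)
Your overall architecture matches the paper's: method of continuity, openness by implicit function theorem, closedness by a priori $C^{2,\alpha}$ estimates, and the trace Jacobi inequality (with $f\in C^2$ so that $\frac{\Delta f}{f}$ is bounded) to force $\max_{\overline B_1}\Delta u$ onto the boundary. Two points deserve comment.

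First, a minor difference: the paper deforms through $f_t=(1-t)+tf$ with \emph{constant} phase $\Theta$, whereas your family $(1-t)F(D^2\underline\varphi,x)+t\Theta$ has $x$-dependent supercritical phase. The interior estimates (Theorems~\ref{Thm_Upbd_critical}, \ref{Thm2}) and Lemma~\ref{lemma_Trace_Jacobi} are stated for constant $\Theta$, so you would need to check they persist for variable phase; the paper's family sidesteps this.

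Second, and this is the substantive gap: pushing $\sup\Delta v_t$ to $\partial B_1$ does \emph{not} by itself bound $v_{t,\nu\nu}$ there. Your sentence ``the equation $F(D^2v_t,x)=\Theta_t$ evaluated on the boundary determines $v_{t,\nu\nu}$'' hides the entire difficulty. With the tangential block $D^2v_t|_T$ bounded, the equation gives an upper bound on $v_{t,\nu\nu}$ \emph{only if} the tangential phase $\sum_{i=1}^{n-1}\arctan(\lambda'_i/f)$ stays uniformly above $\Theta-\frac\pi2$ on $\partial B_1$; otherwise $v_{t,\nu\nu}$ can drift to $+\infty$ while $F$ stays equal to $\Theta$. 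The paper supplies exactly this missing ingredient via a Caffarelli--Nirenberg--Spruck barrier: locate $y_0\in\partial B_1$ where the tangential phase $f'(\lambda')$ is minimal, use convexity of the level set $\{f'=\text{const}\}$ (Yuan \cite{Yua06}) to build a linear lower barrier for $u_r$ touching at $y_0$, read off a bound on $u_{rr}(y_0)$, and then the equation at $y_0$ yields $f'(\lambda')(y_0)\ge\Theta-\frac\pi2+\delta_0$ for a quantitative $\delta_0>0$. This is the step that consumes $\varphi\in C^4$ and is not captured by your outline.
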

\begin{proof}
The uniqueness comes from the comparison principle. For the existence, let $t\in[0,1]$, $f_t=1-t+tf$. Consider the following Dirichlet problem,
\begin{equation}\label{eq_Dirichlet_t}
 \left\{
\begin{array}{lcl}
F(D^2u,x)=\sum^n_{i=1}\arctan\frac{\lambda_i}{f_t}=\Theta      &      & B_1(0),\\
u=\varphi      &      & \partial B_1(0).
\end{array} \right.
\end{equation}
Let $I=\{t\in[0,1]|\ (\ref{eq_Dirichlet_t})\ \mathrm{has \ a \ solution}\ u_t\in C^{2,\alpha}(\overline B_1(0)) \}$. Notice that $0\in I$. We prove that $I$ is both closed and open. Let $t_0\in I$. It follows from Schauder theory that the linearized operator of $F$ is invertible on $\overline B_1(0)$. By implicit function theorem, (\ref{eq_Dirichlet_t}) is solvable near $t_0$, thus $I$ is open. The fact that $I$ is closed comes from the following  $C^{2,\alpha}$ estimates,
$$
||u||_{C^{2,\alpha}(\overline B_1(0))}\le C(\alpha,\Theta,\mathrm{osc}_{ B_1(0)}u,||f||_{ C^{2}(\overline B_1(0))},||\varphi||_{ C^4(\partial B_1(0))}).
$$
$L^\infty$ bound.\\
$$
-|\varphi|_\infty+\frac12|f|_\infty\tan\frac\Theta{2n}(|x|^2-1)\le u\le |\varphi|_\infty+\frac12\tan\frac\Theta{2n}(|x|^2-1).
$$
Gradient bound.\\

Notice that $\lambda_n\ge -\cot[\Theta-\frac\pi2(n-2)]$, by differentiating equation (\ref{gslag2}) once we have
\begin{equation}\label{eq_gradient_est}
F_{m_{ij}}D_{ij}[u_\gamma+cu+C(|x|^2-1)]\ge 0,\ F_{m_{ij}}D_{ij}[u_\gamma-cu-C(|x|^2-1)]\le0,
\end{equation}
where $c,C$ depend on $\Theta,||f||_{C^{1}}$, the region is implicit. Thus we only need to estimate $|Du|$ at the boundary. We have
\begin{equation}\label{eq_gradient_est1}
L_-\le\varphi \le L_+,
\end{equation}
for some linear functions $L_+, L_-$ depending on $||\varphi||_{C^{2}}$, "$=$'' holds at $y\in\partial B_1(0))$. Let $e\in \partial B_1(0)$, we estimate that at $y$,
$$
    D_eL_-(y)\le u_e(y)\le D_eL_+(y).
$$
We have $|Du|_\infty\le C(\Theta,||f||_{C^{1}},|\varphi||_{C^{2}})$.\\

Hessian bound.\\

First we estimate the double tangential derivatives $u_{TT}$ and the mixed derivatives $u_{TN}$ at the boundary. Notice (\ref{eq_gradient_est}) and the fact that $u_T=\varphi_\theta$ at the boundary, we have
$|u_{TT}|_\infty,|u_{TN}|_\infty\le C(\Theta,||f||_{C^{1}},||\varphi||_{C^3})$ similar to the gradient bound. We have
$$
D^2u\sim\left[
\begin{array}{cc}
 \lambda' & u_{TN}  \\
 u_{TN} & u_{NN} \\
\end{array}
\right].
$$
We assume that $u_{NN}$ is sufficiently large (depending also on $\Theta-\frac\pi2(n-2)$). By the linear algebra, we have $f'(\lambda')=\arctan (\lambda'/f)\ge \frac\pi2(n-3)$. Let $y_0$ be the point such that $f'(\lambda')$ attains its minimal at $\partial B_1(0)$. By Yuan \cite{Yua06}, Lemma 2.1, the level set $\{\lambda'|f'(\lambda')=0\}$ is convex, we have at $y_0$
$$
<Df'(\lambda_0'),\lambda'-\lambda_0'>\ge0,
$$
or
$$
<Df'(\lambda_0'),\lambda'>\ge <Df'(\lambda_0'),\lambda_0'>=c_0=c_0(\varphi_{\theta\theta},u_r).
$$
From the above inequality and the relation $(n-1)u_r+\mathrm{tr}D^2\varphi|_T=\mathrm{tr}D^2u|_T$ we get
$$
u_r\ge \frac{c_0}{n-1}-\frac1{n-1}\mathrm{tr}D^2\varphi|_T, \ ``=" \ at \ y_0,
$$
where $r$ denotes the radius variable. Then by (\ref{eq_gradient_est}) there exists a linear function $L_0$ depending on $\Theta$, $||f||_{C^{1}},||\varphi||_{C^4}$  such that
$$
u_r\ge L_0,
$$
``=" holds at $y_0$. We have for $r<1$
$$
\frac{u_r-u_r(y_0)}{r-1}\le \frac{L_0-L_0(y_0)}{r-1},
$$
and $u_{rr}(y_0)\le  C(\Theta,||f||_{C^{1}},||\varphi||_{C^4})$. Finally we have
\begin{align*}
    \lim_{a\rightarrow \infty}F(D^2u+ae_N\otimes e_N,y_0)\ge &F(D^2u+100e_N\otimes e_N,y_0)\ge F(D^2u,y_0)+\delta_0=\Theta+\delta_0,\\
&\arctan [\lambda'/f](y_0)\ge \Theta-\frac\pi2+ \frac{\delta_0}2,
\end{align*}
where $\delta_0=\delta_0(\Theta,||f||_{C^{1}},||\varphi||_{C^4})>0$. Then for $x\in \partial B_1(0)$, assume that $u_{rr}$ is large,
\begin{align*}
    F (D^2u,x)&=\arctan[(\lambda'+o(1))/f]+\arctan[(u_{nn}+O(1))/f]\\
    &\ge \Theta-\frac\pi2+ \frac{\delta_0}4+o(1)+\arctan[(u_{nn}+O(1))/f].
\end{align*}
Thus $u_{nn}$ is bounded, otherwise there will be a contradiction.
\end{proof}

\begin{remark}
We did not use the fact that $f\in C^{2}(\overline B_1(0))$ when estimating $D^2u$ at the boundary, but we need it to narrow our estimates down to the boundary. Since we have derived the interior $C^{1,1}$ estimates (and $C^{2,\alpha}$ by Caffarelli \cite{Caf89}) for equation (\ref{gslag1}), by approximation we prove Theorem \ref{coro_existence}.
\end{remark}

%\begin{corollary}\label{coro_existence}
%Let $f(x)\in C^{0,1}(B_1(0))$, $f\ge1$. Let $\varphi(x)\in C^0(\partial B_1(0)),\alpha\in (0,1)$. The following Dirichlet problem
%$$ \left\{
%\begin{array}{lcl}
%F(D^2u,x)=\sum^n_{i=1}\arctan\frac{\lambda_i}{f}=\Theta      &      & B_1(0),\\
%u=\varphi      &      & \partial B_1(0).
%\end{array} \right. $$
%has an unique solution $u\in C^0(\overline B_{1}(0))\cap C^{2,\alpha}(B_{1}(0))$ if the constant $\Theta\in[\frac\pi2(n-2),\frac\pi2n)$.
%\end{corollary}

\section*{Acknowledgements}
The author would like to express great gratitude to professor Yu Yuan, for proposing this subject to us, and further spends considerable time guiding and discussing with us this study through all these five years.

%\section*{Data availability statement}
%The authors declare all data supporting the findings of this study are available in the paper and in its references.
%
%\section*{Declarations}
%
%The authors declare they have no financial interests.

\appendix
\section {The diagonalize lemma}

Let \{$a_1,\cdots a_n$\} be positive constants, $n\ge1$, $L\in \mathbb{R}^n$. Consider the following $n\times n$ symmetric matrix,
\begin{equation}\label{eq_matrix_1}
\Lambda=\sum^{n}_{i=1}a_ie_i^Te_i-L^TL.
\end{equation}
The fact that $\Lambda\ge0$ is equivalent to the following inequality,
$$
1-\sum^{n}_{i=1}\frac{1}{a_i}|<L,e_i>|^2\ge 0.
$$
We refer to Shankar-Yuan \cite{SY22} and our recent work on Lagrangian mean curvature equations. Here we give a brief indication.

\begin{proof}
Suppose that $a_n\ge a_i>0$, $1\le i\le n-1$. Let $\eta_i=-\sqrt{a_n-a_i}$,
\begin{equation}\label{eq_matrix_1}
\Lambda=a_nI_n-\sum^{n-1}_{i=1}(a_n-a_i)e_i^Te_i-L^TL.
\end{equation}
Consider
$$
\left[
\begin{array}{llll}
 a_1\cdots & \cdots & 0 & \eta_1<L,e_1> \\
% 0\cdots & a_i &\cdots & \eta_i<L,e_i> \\
 \vdots   & \ & \vdots  &  \vdots \\
 0\cdots    & \cdots & a_{n-1} & \eta_{n-1}<L,e_{n-1}> \\
 \eta_1<L,e_1> & \cdots  & \eta_{n-1}<L,e_{n-1}>  & a_n-|L|^2
\end{array}
\right].
$$
$\Lambda$ has no negative eigenvalue is equivalent to the following inequality,
$$
a_n-|L|^2-\sum^{n-1}_{i=1}\eta_i^2|<L,e_i>|^2a_i^{-1}=a_n-\sum^{n}_{i=1}\frac{a_n}{a_i}|<L,e_i>|^2\ge 0.
$$
\end{proof}

%\bibliographystyle{amspain}
%\bibliography{References}

\end{document}